\documentclass[preprint,12pt]{elsarticle}

\usepackage[all]{xy}
\usepackage{amssymb}
\usepackage{amsmath}
\usepackage{graphicx}
\usepackage{color}
\usepackage{amsthm}

\newtheorem{thm}[subsection]{Theorem}
\newtheorem{prop}[subsection]{Proposition}
\newtheorem{lemma}[subsection]{Lemma}
\newtheorem{cor}[subsection]{Corollary}

\newenvironment{proo}{\begin{trivlist} \item{\emph{Proof.}}}
  {\hfill $\square$ \end{trivlist}}


\newcommand{\draftnote}[1]{}


\def\u{\underline }

\def\ss{\sigma}

\def\t{\otimes}

\def\permAs{\mathrm{permAs}}
\def\permAsDer{\mathrm{permAsDer}}
\def\permMag{\mathrm{permMag}}




\def\NN{{\mathbb{N}}}

\def\RR{{\mathbb{R}}}

\def\RR{{\mathbb{R}}}
\def\TT{{\mathbb{T}}}

\def\PP{{\mathcal{P}}}

\def\QQQ{{\mathcal{Q}}}

\def\PPP{{\mathbb{P}}}

\def\Id{\mathrm{Id }}

\def\End{\mathrm{End}}
\def\sgn{\mathrm{sgn}}

\def\epi{\twoheadrightarrow}

\def\End{\mathop{\rm End }}

\def\dim{\mathop{\rm dim }}
\def\id{\mathrm{ id }}

\def\Mod{\textrm{-Mod}}

\def\KK{\mathbb{K}}

\def\Sy{\mathbb{S}}

\def\vert{\texttt{vert}}

\def\arbreA{\vcenter{\xymatrix@R=3pt@C=3pt{
&& \\
&*{}\ar@{-}[ur] \ar@{-}[ul] \ar@{-}[d]     &\\
&&
}}}

\def\arbreAsh{\vcenter{\xymatrix@R=3pt@C=3pt{
0&&1\\
&& \\
&*{}\ar@{-}[ur] \ar@{-}[ul] \ar@{-}[d]     &\\
&&
}}}

\def\arbreAv{\vcenter{\xymatrix@R=3pt@C=3pt{
&v& \\
&*{}\ar@{-}[ur] \ar@{-}[ul] \ar@{-}[d]     &\\
&&
}}}

\def\arbreBA{\vcenter{\xymatrix@R=2pt@C=2pt{
&&&&\\
&&&*{}\ar@{-}[ul] & \\
&&*{}\ar@{-}[uurr] \ar@{-}[uull] \ar@{-}[d]     &&\\
&&&&
}}}

\def\arbreBAsh{\vcenter{\xymatrix@R=2pt@C=2pt{
0&&1&&2\\
&&&&\\
&&&*{}\ar@{-}[ul] & \\
&&*{}\ar@{-}[uurr] \ar@{-}[uull] \ar@{-}[d]     &&\\
&&&&
}}}

\def\arbreBAdecorated{\vcenter{\xymatrix@R=8pt@C=8pt{
&&&&\\
&&&\nu \ar@{-}[ul] \ar@{-}[ur] & \\
&&\mu\ar@{-}[ur] \ar@{-}[uull] \ar@{-}[d]     &&\\
&&&&
}}}

\def\arbreBAv{\vcenter{\xymatrix@R=2pt@C=2pt{
&v_{1}&&v_{2}&\\
&&&*{}\ar@{-}[ul] & \\
&&*{}\ar@{-}[uurr] \ar@{-}[uull] \ar@{-}[d]     &&\\
&&&&
}}}

\def\arbreAB{\vcenter{\xymatrix@R=2pt@C=2pt{
&&&&\\
&*{}\ar@{-}[ur] &&& \\
&&*{}\ar@{-}[uurr] \ar@{-}[uull] \ar@{-}[d]     &&\\
&&&&
}}}

\def\arbreABsh{\vcenter{\xymatrix@R=2pt@C=2pt{
0&&1&&2\\
&&&&\\
&*{}\ar@{-}[ur] &&& \\
&&*{}\ar@{-}[uurr] \ar@{-}[uull] \ar@{-}[d]     &&\\
&&&&
}}}

\def\arbreABshdeux{\vcenter{\xymatrix@R=2pt@C=2pt{
0&&2&&1\\
&&&&\\
&*{}\ar@{-}[ur] &&& \\
&&*{}\ar@{-}[uurr] \ar@{-}[uull] \ar@{-}[d]     &&\\
&&&&
}}}

\def\arbreABdecorated{\vcenter{\xymatrix@R=8pt@C=8pt{
&&&&\\
&\nu\ar@{-}[ur] \ar@{-}[ul] &&& \\
&&\mu\ar@{-}[uurr] \ar@{-}[ul] \ar@{-}[d]     &&\\
&&&&
}}}

\def\arbreABv{\vcenter{\xymatrix@R=2pt@C=2pt{
&v_{1}&&v_{2}&\\
&*{}\ar@{-}[ur] &&& \\
&&*{}\ar@{-}[uurr] \ar@{-}[uull] \ar@{-}[d]     &&\\
&&&&
}}}

\def\arbreBB{\vcenter{\xymatrix@R=2pt@C=2pt{
&&*{}&&\\
&&&& \\
&&*{}\ar@{-}[uurr] \ar@{-}[uull] \ar@{-}[d] \ar@{-}[uu]     &&\\
&&&&
}}}

\def\arbreABC{\vcenter{\xymatrix@R=1pt@C=1pt{
&&&&&&\\
&*{}\ar@{-}[ur] &&&&& \\
&&*{}\ar@{-}[uurr] &&&&\\
&&&*{}\ar@{-}[uuurrr] \ar@{-}[uuulll] \ar@{-}[d] &&&\\
&&&&&&
}}}

\def\arbreABCsh{\vcenter{\xymatrix@R=1pt@C=1pt{
0&&1&&2&&3\\
&&&&&&\\
&*{}\ar@{-}[ur] &&&&& \\
&&*{}\ar@{-}[uurr] &&&&\\
&&&*{}\ar@{-}[uuurrr] \ar@{-}[uuulll] \ar@{-}[d] &&&\\
&&&&&&
}}}

\def\arbreABCshdeux{\vcenter{\xymatrix@R=1pt@C=1pt{
0&&1&&3&&2\\
&&&&&&\\
&*{}\ar@{-}[ur] &&&&& \\
&&*{}\ar@{-}[uurr] &&&&\\
&&&*{}\ar@{-}[uuurrr] \ar@{-}[uuulll] \ar@{-}[d] &&&\\
&&&&&&
}}}

\def\arbreBAC{\vcenter{\xymatrix@R=1pt@C=1pt{
&&&&&&\\
&&&*{}\ar@{-}[ul] &&& \\
&&*{}\ar@{-}[uurr] &&&&\\
&&&*{}\ar@{-}[uuurrr] \ar@{-}[uuulll] \ar@{-}[d] &&&\\
&&&&&&
}}}

\def\arbreACA{\vcenter{\xymatrix@R=1pt@C=1pt{
&&&&&&\\
&*{}\ar@{-}[ur] &&&&*{}\ar@{-}[ul] & \\
&&&&&&\\
&&&*{}\ar@{-}[uuurrr] \ar@{-}[uuulll] \ar@{-}[d] &&&\\
&&&&&&
}}}

\def\arbreAAC{\vcenter{\xymatrix@R=1pt@C=1pt{
&&&&&&\\
&&&&&& \\
&&*{}\ar@{-}[uu]\ar@{-}[uurr] &&&&\\
&&&*{}\ar@{-}[uuurrr] \ar@{-}[uuulll] \ar@{-}[d] &&&\\
&&&&&&
}}}

\def\arbreAACsh{\vcenter{\xymatrix@R=1pt@C=1pt{
0&&1&&2&&3\\
&&&&&&\\
&&&&&& \\
&&*{}\ar@{-}[uu]\ar@{-}[uurr] &&&&\\
&&&*{}\ar@{-}[uuurrr] \ar@{-}[uuulll] \ar@{-}[d] &&&\\
&&&&&&
}}}

\def\arbreAACshdeux{\vcenter{\xymatrix@R=1pt@C=1pt{
0&&1&&3&&2\\
&&&&&&\\
&&&&&& \\
&&*{}\ar@{-}[uu]\ar@{-}[uurr] &&&&\\
&&&*{}\ar@{-}[uuurrr] \ar@{-}[uuulll] \ar@{-}[d] &&&\\
&&&&&&
}}}

\def\arbreACAv{\vcenter{\xymatrix@R=1pt@C=1pt{
&v_{1}&&v_{2}&&v_{3}&\\
&*{}\ar@{-}[ur] &&&&*{}\ar@{-}[ul] & \\
&&&&&&\\
&&&*{}\ar@{-}[uuurrr] \ar@{-}[uuulll] \ar@{-}[d] &&&\\
&&&&&&
}}}

\def\arbreACB{\vcenter{\xymatrix@R=1pt@C=1pt{
&&&&&&\\
&*{}\ar@{-}[ur] &&&&& \\
&&&&*{}\ar@{-}[uull] &&\\
&&&*{}\ar@{-}[uuurrr] \ar@{-}[uuulll] \ar@{-}[d] &&&\\
&&&&&&
}}}

\def\arbreBCA{\vcenter{\xymatrix@R=1pt@C=1pt{
&&&&&&\\
&&&&&*{}\ar@{-}[ul] & \\
&&*{}\ar@{-}[uurr] &&&&\\
&&&*{}\ar@{-}[uuurrr] \ar@{-}[uuulll] \ar@{-}[d] &&&\\
&&&&&&
}}}

\def\arbreCAB{\vcenter{\xymatrix@R=1pt@C=1pt{
&&&&&&\\
&&&*{}\ar@{-}[ur] &&& \\
&&&&*{}\ar@{-}[uull] &&\\
&&&*{}\ar@{-}[uuurrr] \ar@{-}[uuulll] \ar@{-}[d] &&&\\
&&&&&&
}}}

\def\arbreCBA{\vcenter{\xymatrix@R=1pt@C=1pt{
&&&&&&\\
&&&&&*{}\ar@{-}[ul] & \\
&&&&*{}\ar@{-}[uull] &&\\
&&&*{}\ar@{-}[uuurrr] \ar@{-}[uuulll] \ar@{-}[d] &&&\\
&&&&&&
}}}


\def\dessinA{ \begin{picture}(250,100)
\put(0,0){\scriptsize {$\times$}}
\put(20,0){\scriptsize {$\times$}}
\put(78,0){\scriptsize {$\times$}}
\put(98,0){\scriptsize {$\times$}}
\put(2,5){\line(0,1){23}}
\put(22,5){\line(0,1){23}}
\put(2,5){\line(2,1){38}}
\put(80,5){\line(1,1){18}}
\put(80,23){\line(1,-1){18}}
\put(0,25){\scriptsize {$\bullet$}}
\put(20,25){\scriptsize {$\bullet$}}
\put(40,25){\scriptsize {$\bullet$}}
\put(78,25){\scriptsize {$\bullet$}}
\put(98,25){\scriptsize {$\bullet$}}
\put(0,40){\line(1,0){100}}
\put(8,45){\scriptsize {$\times$}}
\put(63,45){\scriptsize {$\times$}}
\put(11,50){\line(0,1){25}}
\put(13,50){\line(3,2){35}}
\put(15,50){\line(5,2){55}}
\put(68,50){\line(1,1){22}}
\put(34,72){\line(4,-3){30}}
\put(8,72){\scriptsize {$\bullet$}}
\put(30,72){\scriptsize {$\bullet$}}
\put(48,72){\scriptsize {$\bullet$}}
\put(70,72){\scriptsize {$\bullet$}}
\put(90,72){\scriptsize {$\bullet$}}
\put(120,40){\scriptsize {$\mapsto$}}
\put(150,16){\scriptsize {$\times$}}
\put(170,16){\scriptsize {$\times$}}
\put(206,16){\scriptsize {$\times$}}
\put(230,16){\scriptsize {$\times$}}
\put(152,20){\line(0,1){24}}
\put(154,20){\line(3,2){37}}
\put(173,20){\line(0,1){24}}
\put(210,20){\line(1,1){25}}
\put(232,20){\line(-1,1){25}}
\put(150,45){\scriptsize {$\bullet$}}
\put(170,45){\scriptsize {$\bullet$}}
\put(190,45){\scriptsize {$\bullet$}}
\put(204,45){\scriptsize {$\bullet$}}
\put(233,45){\scriptsize {$\bullet$}}
\put(149,50){\scriptsize {$\times$}}
\put(169,50){\scriptsize {$\times$}}
\put(189,50){\scriptsize {$\times$}}
\put(203,50){\scriptsize {$\times$}}
\put(232,50){\scriptsize {$\times$}}
\put(152,54){\line(0,1){24}}
\put(173,54){\line(2,3){18}}
\put(192,54){\line(2,3){18}}
\put(205,54){\line(-4,3){33}}
\put(236,54){\line(0,1){24}}
\put(150,78){\scriptsize {$\bullet$}}
\put(170,78){\scriptsize {$\bullet$}}
\put(189,78){\scriptsize {$\bullet$}}
\put(206,78){\scriptsize {$\bullet$}}
\put(233,78){\scriptsize {$\bullet$}}
\end{picture}}

\def\Kzero{\xymatrix@R=4pt@C=4pt{
\\
\\
\\
{\bullet}\\
}}

\def\KunA{\xymatrix@R=4pt@C=4pt{
&&\\
&&\\
&&\\
*{}\ar@{-}[rr]&&*{}\\
}}

\def\PdeuxA{\xymatrix@R=4pt@C=4pt{
&&&&&\\
&&*{}\ar@{-}[dll]\ar@{-}[drr]  &&& \\
*{}\ar@{-}[dd] &&&&*{}\ar@{-}[dd] &\\
&&&&& \\
*{}\ar@{-}[drr] &&&&*{}\ar@{-}[dll] & \\
&&*{}&&& \\
}}

\def\permutohedronLemma{\xymatrix@R=10pt@C=10pt{
&&&[123]&&&\\
&&&*{}\ar[dll]\ar[drr]  &&& \\
[213]&*{}\ar[dd] &&&&*{}\ar@{.>}[dd] &[132]\\
&&&&&& \\
[312]&*{}\ar[drr] &&&&*{}\ar[dll] &[231] \\
&&&*{}&&& \\
&&&[321]&&& \\
}}

\def\PtroisA{\xymatrix@R=4pt@C=4pt{
&&&*{}\ar@{-}[dll]\ar@{.}[dr]\ar@{-}[r] &*{}\ar@{-}[drr]\ar@{-}[dll] &&& \\
&*{}\ar@{-}[ddl]\ar@{-}[r]&*{}\ar@{-}[dd] & &*{}\ar@{.}[ddl]\ar@{.}[drr]&&*{}\ar@{-}[dd]\ar@{-}[dr]& \\
&&& & &&*{}\ar@{.}[r]\ar@{.}[ddl]&*{}\ar@{-}[dd] \\
*{}\ar@{-}[dd]\ar@{.}[dr]&&*{}\ar@{-}[ddl]\ar@{-}[drr]&*{}\ar@{.}[dll]\ar@{.}[drr] & &&*{}\ar@{-}[dll]\ar@{-}[dr]& \\
&*{}\ar@{.}[dd]&& &*{}\ar@{-}[ddl] &*{}\ar@{.}[dd]&&*{}\ar@{-}[ddl] \\
*{}\ar@{-}[dr]\ar@{-}[r]&*{}\ar@{-}[drr]&& & &&& \\
&*{}\ar@{-}[drr]&&*{}\ar@{-}[dr] & &*{}\ar@{.}[dll]\ar@{.}[r]&*{}\ar@{-}[dll]& \\
&&&*{}\ar@{-}[r] &*{} &&& \\
}}

\def\ShuffleTree{
\xymatrix@R=8pt@C=8pt{
0&1&5&9&2 &4 &6&7&8 \\
&&&&  &&&*{}\ar@{-}[lu]\ar@{-}[u]\ar@{-}[ur]&  \\
&&*{}\ar@{-}[lluu]\ar@{-}[luu]\ar@{-}[uu]\ar@{-}[uur]&&  &&*{}\ar@{-}[luu]\ar@{-}[ur]&&  \\
&&&&*{}\ar@{-}[llu]\ar@{-}[uuu] \ar@{-}[urr] \ar@{-}[d] &&&&  \\
&&&&  &&&&  \\
}}

\DeclareMathOperator*{\Sur}{Sur}
\DeclareMathOperator*{\Surj}{Surj}
\DeclareMathOperator*{\Sh}{Sh}



\journal{Journal of Combinatorial Theory A}

\begin{document}

\begin{frontmatter}



\title{Permutads}


\author{Jean-Louis Loday}
\ead{loday@math.unistra.fr}
\address{Institut de Recherche Math\'ematique Avanc\'ee,
    CNRS et Universit\'e de Strasbourg, France}
    
\author{Mar\'ia Ronco}
\ead{mariaronco@inst-mat.utalca.cl}
\address{Instituto de Matem\' aticas y F\' \i sica, Universidad de Talca, Chile}

\begin{abstract}
We unravel the algebraic structure which controls the various ways of computing the word $((xy)(zt))$ and its siblings. We show that it gives rise to a new type of operads, that we call permutads. A permutad is an algebra over the monad made of surjective maps between finite sets. It turns out that this notion is equivalent to the notion of ``shuffle algebra'' introduced previously by the second author. It is also very close to the notion of ``shuffle operad'' introduced by V.\ Dotsenko and A.\ Khoroshkin. It can be seen as a noncommutative version of the notion of nonsymmetric operads. We show that the role of the associahedron in the theory of  operads is played by the permutohedron in the theory of permutads.

\end{abstract}

\begin{keyword}
Tree\sep permutohedron\sep permutad\sep operad\sep shuffle\sep weak Bruhat order
\end{keyword}


\end{frontmatter}

\maketitle

\section*{Introduction} \label{s:int} The nonsymmetric operads are encoded by the planar rooted trees in the following sense. These trees determine a monad on the category of arity graded modules and a nonsymmetric operad is an algebra over this monad. In this paper we replace the planar rooted trees by the surjection maps between finite sets. We still get a monad on arity graded modules, and an algebra over this monad is called a \emph{permutad}.

A permutad can be presented with ``partial operations'' $\circ_{\ss}$ where $\ss$ is a shuffle, as generators. Under this presentation we see that the notion of permutad is equivalent to the notion of ``shuffle algebra'' introduced in \cite{Ronco11} by the second author. There is still another way of presenting a permutad, through some particular shuffle trees. This presentation enables us to to relate the notion of permutad to the notion of ``shuffle operad'' introduced by Dotsenko and Khoroshkin in \cite{DotsenkoKhoroshkin09}. 

If we restrict ourself to permutads generated by binary operations, then the general operations can be described by the ``leveled planar binary rooted trees''. These binary permutads are the relevant tool to handle algebras for which a product like  $((xy)(zt))$ depends on the first product which is performed: either $(xy)$ or $(zt)$.

An interesting  example of permutad is presented by one binary operation $xy$ subject to one relation
$$x(yz) = q \ (xy)z\ ,$$
where $q$ is a parameter in the ground field. In the nonsymmetric operad framework such an operad is interesting only for $q=0, 1$ or $\infty$, since in the other cases the free algebra collapses.  However, in the permutad setting, it makes sense for any value of $q$ and, computing in this framework, leads to the appearance of the length function of the Coxeter group $\Sy_{n}$. This result is a consequence of an independent interesting combinatorial result on the poset structure (weak Bruhat order) of the symmetric group $\Sy_{n}$. In this poset there are two kinds of covering relations which are best understood by replacing the permutations by the leveled planar binary trees. A covering relation of the first kind is obtained by moving a vertex from left to right. A covering relation of the second  kind is obtained by exchanging two levels. Our result says that the graph obtained by keeping only the covering relations of the first kind is connected.

As a consequence of the preceding result the associative permutad, obtained for $q=1$ and denoted by ${\permAs}$, is the same as the associative nonsymmetric operad $As$, i.e.\ ${\permAs}_{n}$ is one dimensional. However looking at the minimal model in nonsymmetric operads vs permutads leads to different objects. It is well-known that in the nonsymmetric operad setting the minimal model of $As$ is completely determined by the family of associahedrons. We prove that  in the permutad setting the minimal model of ${\permAs}$ is completely determined by the family of permutohedrons. 

The notion of permutad can be thought of as a certain ``noncommutative'' variation of the notion of nonsymmetric operad since we, essentially, leave out the parallel composition axiom for partial operations. This noncommutative feature is illustrated by the computation of the permutad encoding the associative algebras with derivation. In the nonsymmetric operad framework it involves the algebra of polynomials and in the permutad framework it involves the algebra of noncommutative polynomials.

\medskip
\noindent{\sc Contents}

1. Surjective maps and permutads

2. Partial operations and  nonsymmetric operads

3. Shuffle algebras

4. Binary quadratic permutads

5. Associative permutad and the permutohedron

6. The permutad of associative algebras with derivation

7. Permutads and shuffle operads

8. Appendix 1: Combinatorics of  surjections on finite sets

9. Appendix 2: The permutohedron

\bigskip

\noindent{\bf Notation.} Let $\u{n}=\{1,\ldots , n\}$ be a finite ordered set with $n$ elements. By convention $\u{0}$ is the empty set. The automorphisms group of $\u{n}$ is the symmetric group denoted by $\Sy_{n}$. Its unit element is denoted by $1_{n}$ (identity permutation). For the operadic  terminology the reader may refer either to \cite{MSS} or to \cite{LV}, from which .

\bigskip

\noindent{\bf Acknowledgement.} We warmly thank Vladimir Dotsenko for his comment on shuffle algebras. Thanks also to Emily Burgunder for her comments on the first draft of this paper. We are grateful to the referees for their comments and questions which help us to improve this paper.

{This work has been partially supported by  the Proyecto Fondecyt Regular 1100380, and the ``MathAmSud'' project OPECSHA 01-math-10.

\section{Surjective maps and permutads}\label{surj+perm}

We introduce ad hoc terminology about surjective maps of finite sets. Then we give the ``combinatorial definition'' of a permutad.

\subsection{Surjective maps}\label{surjmaps} Let $n$ and $k$ be positive integers such that $1\leq k\leq n$. We denote by $\Sur(\u{n}, \u{k})$ the set of surjective maps $t:\u{n}\to \u{k}$. We call  \emph{vertices} the elements in the target set $\u{k}$. The set of vertices of the surjective map $t$ is denoted by $\vert\, t$. The \emph{arity} of $v\in \vert\, t$ is 
$|v|:=\# t^{-1}(v)+1$ (note the shift). When $k=1$, $\Sur(\u{n}, \u{1})$ contains only one element that we call a \emph{corolla} (denoted by $c_{n+1}$), and when $k=n$ the set $\Sur(\u{n}, \u{n})$ coincides with the symmetric group $\Sy_{n}$. We adopt the following notation:
$\Surj_{n}:= \bigcup _{k} \Sur(\u{n}, \u{k})$ (disjoint union). By convention $\Surj_{0}= \{{\bf 1}\}$ and its unique element is considered as a surjective map ${\bf 1}:\u{0} \to \u{0}$ (formally $\u{0}=\emptyset$) with no vertex. In section \ref{appendixassoc} we list all the surjective maps for $n=2, 3$.

\subsection{Substitution}\label{substitution} Let $t\in \Sur(\u{n}, \u{k})$ and  $t_{i}\in \Sur(\u{i_{j}}, \u{m_{j}}), j=1,\ldots , k$, be surjective maps such that $i_{j}= \# t^{-1}(j)$. We put $m:=\sum_{j}m_{j}$. By definition the \emph{substitution} of $\{t_{j}\}$ in $t$ is the surjective map
$(t; t_{1},\ldots , t_{k})\in \Sur(\u{n}, \u{m})$ given by

$$(t; t_{1},\ldots , t_{k})(a) := m_{1}+\cdots+m_{j-1}+t_{j}(b),$$
whenever $t(a) = j$ and $a$ is the $b$th element in $t^{-1}(j)$.

 Example with $k=2$:
 
 $$\dessinA$$

Observe that substitution is an associative operation. 

The family of surjective maps endowed with the process of substitution could be described as a colored operad (cf.\ \cite{vanderLaan}), where the colors are integers, and also as a ``multi-category" (cf.\ \cite{Leinster}), see \ref{coloredop}.

\subsection{A monad on arity-graded modules}\label{monadPerm} Let $\KK$ be a commutative ring and let $\NN^+\Mod$ be the category of positively graded $\KK$-modules. An object $M$ of $\NN^+\-\Mod$ is a family $\{M_{n}\}, n\geq 1$, of $\KK$-modules. In this paper we say ``arity'' in place of degree for these objects.

We define a monad in the category $\NN^+\-\Mod$ of arity-graded modules as follows. First, for any $M$ and any surjective map $t\in \Sur(\u{n}, \u{k})$ we define a module
$$M_{t}:= \bigotimes _{v\in \vert{\, t}} M_{|v|},$$
where $|v|$ is the arity of the vertex $v$.

Second, for any arity-graded module $M$ we define an arity-graded module $\PPP(M)$ as follows:
$$\mathbb{P} (M)_{n+1}:= \bigoplus_{t\in \Surj_{n}} M_{t}\quad \mathrm{for} \quad n\geq 1,$$
and $\mathbb{P} (M)_{1}:= \KK\, \id$.

Explicitly the module $\PPP(M)_{n+1}$ is spanned by surjective maps with $n$ inputs whose vertices are decorated by elements of $M$.

\textsf{Example:} 
$$\xymatrix{
 & *{\bullet}\ar@{-}[dr] & *{\bullet}\ar@{-}[drr] & *{\bullet}\ar@{-}[dl] &*{\bullet}\ar@{-}[dll] & *{\bullet}\ar@{-}[dl] \\
 && \times\atop\mu\in M_{4}& &\times\atop\nu\in M_{3} & \\
 }$$
So, we have defined a functor
$$\PPP: \NN^+\Mod\to \NN^+\Mod.$$
There is a natural map $\iota(M) : M\to \PPP(M)$ which consists in associating to an element $\mu\in M_{n}$ the corolla $c_{n}$ of arity $n$ decorated by $\mu$.

\begin{prop}\label{propMonad} The substitution of surjective maps defines a transformation of functors $\Gamma : \mathbb{P}\circ  \mathbb{P} \to \mathbb{P} $ which is associative and unital. So $(\mathbb{P} ,\Gamma , \iota)$ is a monad on arity-graded modules.
\end{prop}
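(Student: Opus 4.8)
The plan is to verify the three monad axioms by unwinding the definitions of $\PPP$, $\Gamma$, and $\iota$ in terms of surjective maps and their substitution, and then invoking the already-observed associativity of substitution.

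First I would describe the transformation $\Gamma$ explicitly. An element of $\PPP(\PPP(M))_{n+1}$ is a surjective map $t\in\Sur(\u{n},\u{k})$ whose vertex $v$ of arity $|v|$ is decorated not by an element of $M_{|v|}$ but by an element of $\PPP(M)_{|v|}$; that is, by a pair consisting of a surjective map $t_v\in\Sur(\u{i_v},\u{m_v})$ with $i_v+1=|v|$, together with a decoration of the vertices of $t_v$ by elements of $M$. Since $i_v=\#t^{-1}(v)$, the family $\{t_v\}_{v\in\vert\,t}$ is exactly the kind of family that can be substituted into $t$ as in \ref{substitution}, producing a surjective map $(t;t_{1},\ldots,t_{k})\in\Sur(\u{n},\u{m})$ with $m=\sum_v m_v$; its vertices are naturally identified with the disjoint union of the vertex sets of the $t_v$, so the decorations of the inner surjective maps assemble into a decoration of $(t;t_{1},\ldots,t_{k})$ by elements of $M$. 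This gives an element of $\PPP(M)_{n+1}$, and extending $\KK$-linearly (the tensor factors multiply correctly because $M_{(t;t_1,\ldots,t_k)}\cong\bigotimes_v M_{t_v}$) defines $\Gamma(M):\PPP(\PPP(M))\to\PPP(M)$. Naturality in $M$ is immediate since the construction only rearranges decorations without touching them. One should also check the arity-$1$ part: $\PPP(\PPP(M))_1=\KK\,\id$ maps to $\PPP(M)_1=\KK\,\id$ identically, consistent with substituting the trivial surjective map.

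Next I would check the three axioms. For unitality one must show that the two composites $\PPP(M)\xrightarrow{\iota(\PPP(M))}\PPP(\PPP(M))\xrightarrow{\Gamma(M)}\PPP(M)$ and $\PPP(M)\xrightarrow{\PPP(\iota(M))}\PPP(\PPP(M))\xrightarrow{\Gamma(M)}\PPP(M)$ are both the identity. In the first, an element of $\PPP(M)_{n+1}$, viewed as a decorated surjective map $s$, is sent to the corolla $c_{n+1}$ decorated by $s$, i.e.\ to $(c_{n+1};s)$; but substituting a single surjective map into the one-vertex corolla returns $s$ unchanged. In the second, a decorated surjective map $t$ with vertices decorated by $\mu_v\in M_{|v|}$ is sent to $t$ with each vertex $v$ decorated by the corolla $c_{|v|}$ carrying $\mu_v$; substituting these corollas back into $t$ recovers $t$ with its original decorations, because substituting the identity-arity corolla at a vertex does nothing to the underlying map (this is the $m_j=1$, $t_j=c$ case of \ref{substitution}). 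For associativity one must show $\Gamma(M)\circ\PPP(\Gamma(M))=\Gamma(M)\circ\Gamma(\PPP(M))$ as maps $\PPP(\PPP(\PPP(M)))\to\PPP(M)$. An element of the triple composite is a surjective map $t$ whose vertices are decorated by surjective maps $t_v$ whose vertices are in turn decorated by surjective maps $t_{v,w}$ decorated by elements of $M$. One composite first substitutes the $t_{v,w}$ into the $t_v$ and then the results into $t$; the other first substitutes the $t_v$ into $t$ and then the $t_{v,w}$ into that. Both yield the same decorated surjective map precisely because substitution of surjective maps is associative, which is exactly the remark recorded after the definition of substitution in \ref{substitution}. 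The bookkeeping of the $M$-decorations along the vertex-set identifications is compatible on both sides because the vertex set of a triple substitution is, either way, the disjoint union of the vertex sets of the innermost maps $t_{v,w}$.

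The only genuinely non-routine point is the careful matching of vertex sets and tensor factors: one must make explicit the canonical bijection between $\vert\,(t;t_1,\ldots,t_k)$ and $\coprod_v\vert\,t_v$, check that it is compatible with arities so that $M_{(t;t_1,\ldots,t_k)}\cong\bigotimes_{v}M_{t_v}$ canonically, and then check that under iterated substitution these bijections compose associatively in the same way the surjective maps do. Once this combinatorial lemma about substitution is in hand, all three monad axioms reduce, via the identification of decorated surjective maps with the appropriate tensor products, to the associativity and the (left/right) unitality of substitution for bare surjective maps, so the main obstacle is entirely this indexing compatibility rather than anything conceptually deep.
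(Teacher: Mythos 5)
Your proposal is correct and follows essentially the same route as the paper: define $\Gamma$ by substituting the inner decorated surjections into the outer one, observe naturality, and reduce associativity and unitality of $\Gamma$ to the associativity of substitution and the two corolla identities $(c_{n+1};s)=s$ and $(t;c,\ldots,c)=t$. The only difference is that you make explicit the bookkeeping of the canonical bijection $\vert\,(t;t_1,\ldots,t_k)\cong\coprod_v\vert\,t_v$ and the resulting identification of tensor factors, which the paper's proof leaves implicit.
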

\begin{proo} From the definition of $ \mathbb{P} $ we get
\begin{alignat*}{2}
 \mathbb{P} ( \mathbb{P} (M))_{n} &= \bigoplus_{t\in \Surj_{n-1}}  \mathbb{P} (M)_{t}\\
&=  \bigoplus_{t\in \Surj_{n-1}} \big( \mathbb{P} (M)_{i_{1}}\t \cdots \t \mathbb{P} (M)_{i_{k}}\big)\\
  &=  \bigoplus_{t\in \Surj_{n-1}} \Big( \bigotimes _{j=1}^{j=k}\big(\bigoplus_{s\in \Surj_{j-1}} M_{s}\big)\Big).
\end{alignat*}

  Under the substitution of surjective maps we get an element of $ \mathbb{P} (M)_{n}$, since at any vertex $j$ of $t$ we have an element of $ \mathbb{P} (M)_{i_{j}}=\bigoplus_{s\in \Surj_{j-1}} M_{s}$, that is a surjective map $s$ and its decoration. We substitute this data at each vertex of $t$ to get a new decorated surjective map, decorated by elements of $M$.  Therefore we have obtained a linear map $\Gamma(M): \mathbb{P}(\mathbb{P}(M))_{n} \to \mathbb{P}(M)_{n}$, which defines a morphism  of arity-graded modules
  $$\Gamma(M): \mathbb{P}(\mathbb{P}(M)) \to \mathbb{P}(M).$$
  Since it is functorial in $M$ we get a transformation of functors $\Gamma: \mathbb{P}\circ \mathbb{P}\to \mathbb{P}$.

  Since the substitution process is associative, $\Gamma$ is associative. Substituting a vertex by a corolla does not change the surjective map. Substituting a surjective map to the vertex of a corolla gives the former surjective map. Hence $\Gamma$ is also unital.

  We have proved that  $(\mathbb{P} ,\Gamma , \iota)$ is a monad.
 \end{proo}

\subsection{Colored operad}\label{coloredop} A \emph{colored operad} is to an operad what a category is to a monoid. More precisely, there is a set of colors and for each operation of the operad there is a color assigned to each input and a color assigned to the output. In order for a composition like $\mu \circ (\nu_{1}, \ldots , \nu_{k})$ to hold the color of the ouput of $\nu_{i}$ has to be equal to the color of the $i$th input of $\mu$. Of course the colors of the inputs of the composite are the colors of the $\nu_{i}$'s and the color of the output is the color of the output of $\mu$. See for instance \cite{vanderLaan}.

The monad $(\PPP, \Gamma, \iota)$ defined above can be seen as a nonsymmetric colored operad, where the colors are the natural numbers. 

\subsection{Permutads}\label{defPermutad} By definition a \emph{permutad} $\PP$ is a unital algebra over the monad $(\mathbb{P} ,\Gamma , \iota)$.  So $\PP$ is an arity-graded module such that $\PP_{1}=\KK\, \id$  endowed with a morphism of arity-graded modules 
$$\Gamma_{\PP}:\mathbb{P}(\PP)\to \PP$$
compatible with the composition $\Gamma$ and the unit $\iota$.
It means that the following diagrams are commutative:
$$\xymatrix{
                                                                                        &\mathbb{P}  (\mathbb{P}  (\PP ))  \ar[rr]^-{\PPP(\Gamma_{\PP})}&&\mathbb{P} (\PP) \ar[ddd]^{\Gamma_{\PP}}\\
(\mathbb{P}  \circ \mathbb{P}  )(\PP) \ar[ur]^-{\cong} \ar[dd]_{\Gamma(\PP)}&                                                                  &&\\
&&\\
\mathbb{P} (\PP)  \ar[rrr]^{\Gamma_{\PP}} &&& \PP  \\
}$$
\noindent and
$$\xymatrix{
\Id(\PP) \ar[r]^-{\iota(\PP)} \ar[dr]_{=}&\mathbb{P} (\PP)\ar[d]^-{\Gamma_{\PP}} \\
& \PP  }$$
We often call an element of $\PP$ an \emph{operation} and the map $\Gamma(\PP)$ the composition of operations. We, now, give a first example: the free permutad.

\begin{prop}[Free permutad]\label{propFreePermutad} For any arity-graded module $M$ such that $M_{1}=0$, the arity-graded module $\PPP(M)$ is a permutad which is the free permutad over $M$.  
\end{prop}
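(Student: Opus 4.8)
The plan is to establish the standard "free algebra over a monad" statement: for a monad $(\PPP, \Gamma, \iota)$ the image $\PPP(M)$ of any object $M$ carries a canonical algebra structure given by the multiplication $\Gamma(M): \PPP(\PPP(M)) \to \PPP(M)$, and this algebra is free on $M$ in the sense that the forgetful functor from permutads to arity-graded modules has $\PPP$ as its left adjoint. The only extra wrinkle here is the hypothesis $M_1 = 0$, which is exactly what guarantees $\PPP(M)_1 = \KK\,\id$, so that $\PPP(M)$ really is a permutad in the sense of \ref{defPermutad} and not just a $\PPP$-algebra with a possibly larger arity-$1$ part.

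First I would check that $\PPP(M)$ is a permutad. By the formula $\PPP(M)_{n+1} = \bigoplus_{t\in\Surj_n} M_t$ for $n\geq 1$ and $\PPP(M)_1 = \KK\,\id$, and since $M_1 = 0$ forces $M_{c_n}$-type contributions with a unary vertex to vanish, there is nothing to add to the arity-$1$ component; so $\PPP(M)_1 = \KK\,\id$ as required. The composition $\Gamma_{\PPP(M)} := \Gamma(M): \PPP(\PPP(M)) \to \PPP(M)$ is the structure map, and the associativity and unitality diagrams of \ref{defPermutad} are precisely the monad axioms for $(\PPP,\Gamma,\iota)$ applied at $M$, which were verified in Proposition \ref{propMonad}. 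Hence $(\PPP(M), \Gamma(M))$ is a permutad.

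Next I would verify the universal property. Given any permutad $\QQ$ with structure map $\Gamma_\QQ: \PPP(\QQ)\to\QQ$ and any morphism of arity-graded modules $f: M \to \QQ$, one defines $\bar f: \PPP(M)\to\QQ$ as the composite
$$\bar f := \Gamma_\QQ \circ \PPP(f): \PPP(M)\xrightarrow{\PPP(f)} \PPP(\QQ)\xrightarrow{\Gamma_\QQ} \QQ.$$
Concretely $\bar f$ sends a surjective map $t$ with vertices decorated by elements of $M$ to the same $t$ with the decorations pushed through $f$, then evaluated by the composition of $\QQ$. One checks $\bar f\circ\iota(M) = f$ using the unit axiom of the monad together with the unit diagram for $\QQ$; that $\bar f$ is a permutad morphism, i.e.\ $\bar f\circ\Gamma(M) = \Gamma_\QQ\circ\PPP(\bar f)$, follows from naturality of $\Gamma$ and the associativity diagram for $\QQ$; and uniqueness of $\bar f$ among permutad morphisms extending $f$ is the usual diagram chase: if $g$ is another such, then $g = g\circ\Gamma(M)\circ\PPP(\iota(M)) = \Gamma_\QQ\circ\PPP(g)\circ\PPP(\iota(M)) = \Gamma_\QQ\circ\PPP(g\circ\iota(M)) = \Gamma_\QQ\circ\PPP(f) = \bar f$, using the unit axiom $\Gamma(M)\circ\PPP(\iota(M)) = \id$.

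The arguments above are entirely formal and hold for any monad; the only genuinely content-bearing point — and hence the one place where a reader might want reassurance — is that the constraint $\PP_1 = \KK\,\id$ built into the definition of a permutad is automatically satisfied by $\PPP(M)$ precisely when $M_1 = 0$. So I expect no real obstacle: the proof is a matter of writing out the standard adjunction and pausing only to note where the hypothesis $M_1 = 0$ is used.
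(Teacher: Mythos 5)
Your proposal is correct and follows essentially the same route as the paper's proof: endow $\PPP(M)$ with the structure map $\Gamma(M)$, then establish freeness by sending $f\colon M\to\QQQ$ to the composite $\Gamma_{\QQQ}\circ\PPP(f)$ and checking it is the unique extending permutad morphism. You simply spell out the formal monad-theoretic verifications (and the role of $M_1=0$) that the paper declares ``straightforward to check.''
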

\begin{proo} The structure of permutad $\Gamma_{\PPP(M)}$ on $\PPP(M)$ is induced by $\Gamma$, that is 
$$\Gamma_{\PPP(M)}: \mathbb{P}(\mathbb{P}(M) = (\mathbb{P}\circ \mathbb{P})(M) \xrightarrow{\Gamma(M)} \mathbb{P}(M).$$

The proof that $\mathbb{P}(M)$ is free among permutads goes as follows. Let $\QQQ$ be another permutad and let $f: M\to \QQQ$ be a morphism of arity-graded modules. The composite $\mathbb{P}(M)\xrightarrow{\PP(f)} \mathbb{P}(\QQQ) \to \QQQ$, which uses the permutadic structure of $\QQQ$, extends the map $f$. It is straightforward to check that it is a map of permutads and that it is unique as a permutad morphism extending $f$. So $\PPP(M)$ is free over $M$.
\end{proo}

\subsection{Ideal and quotient} Given a permutad $\PP$, a sub-module $\mathcal{I}$ is said to be an ideal if any permutadic composition of operations in $\PP$ for which at least one is in  $\mathcal{I}$, is also in  $\mathcal{I}$. As an immediate consequence the quotient $\PP/ \mathcal{I}$ acquires a structure of permutad.

\subsection{Differential graded permutad}\label{dgpermutad} By replacing the category of modules over the ground ring $\KK$ by the category of differential graded modules (i.e.\ chain complexes), we obtain the definition of \emph{differential graded permutads}, abbreviated into \emph{dg permutads}. Explicitly, when $(M,d)$ is a dg 
$\NN^+\Mod$ we make $\PPP(M)$ into a dg $\NN^+\Mod$ by defining the differential $d$ by
$$d(t; \mu_{1}, \ldots , \mu_{k}) := \sum_{i=1}^{k} (-1)^{\epsilon_{i}}(t; \mu_{1}, \ldots , d\mu_{i}, \ldots , \mu_{k})$$
where $t: \u{n} \to \u{k}$ is a surjective map and $\epsilon_{i}= |\mu_{1}| + \cdots +  |\mu_{i-1}|$. Then the structure map $\Gamma_{M}: \PPP(M) \to M$ is required to be a dg $\NN^+\Mod$ morphism.


\section{Partial operations and non-symmetric operads}\label{partialshufflealg} The definition of a permutad that we have given is similar to the so-called ``combinatorial'' presentation of an operad (see Chapter 5 of \cite{LV}). Any surjection between finite sets can be obtained by successive substitutions of surjections with target set of size $2$. This property will enable us to give a definition of a permutad with a minimal data. It is similar to the so-called ``partial'' presentation of an operad.

\subsection{On the substitution of surjective maps with target set of size $2$} Let us consider a surjective map $r:\u{k}\to \u{3}$. We denote by $n+1, m+1, \ell+1$ the arity of the vertex $1,2,3$ respectively. There are two ways to consider $r$ as a result of substitution. Either as a substitution which gives rise to  $1$ and $2$, or, as a substitution which gives $2$ and $3$.
The first process gives two surjective maps $s:\u{m+\ell} \to \u{2}$ and $t: \u{n+m+\ell} \to \u{2}$, and the second process gives 
$v:\u{n+m} \to \u{2}$ and $u: \u{n+m+\ell} \to \u{2}$.

\medskip

$\xymatrix@R=8pt@C=8pt{
\bullet \ar@{-}[rddd]&  \bullet  \ar@{-}[rrddd] &\bullet \ar@{-}[lddd]&\bullet \ar@{-}[rrddd]&\bullet \ar@{-}[lllddd]&\bullet \ar@{-}[ddd]&\bullet \ar@{-}[lllddd]\\
&&&&&&\\
&&&&&&\\
  & 1 & & 2 &&3 & \\
 }$
 \hskip2cm $\xymatrix@R=8pt@C=8pt{
\bullet \ar@{-}[rddd]&  \bullet  \ar@{-}[rrddd] &\bullet \ar@{-}[lddd]&\bullet \ar@{-}[rrddd]&\bullet \ar@{-}[lllddd]&\bullet \ar@{-}[ddd]&\bullet \ar@{-}[lllddd]\\
&&&&&&\\
&&&&&&\\
  & 1 & & 2 &&3 & \\
 }$\\
 ${}\hskip1cm\underbrace{\hskip2cm}_{v}$\hskip4cm  ${}\hskip1.6cm\underbrace{\hskip2cm}_{s}$\\
 ${}\hskip1cm\underbrace{\hskip3cm}_{u}$ \hskip2.4cm  ${}\hskip1cm\underbrace{\hskip3cm}_{t}$
 
 \medskip
 
Hence $r$ can be seen either as the substitution of $s$ in $t$ at $2$, or as a substitution of $v$ in $u$ at $1$. So, if $\lambda, \mu, \nu$ are the decorations of the vertices $3$, $2$ and $1$ respectively, we get
$$(\lambda\circ_{s}\mu) \circ_{t}\nu =  \lambda\circ_{u}(\mu \circ_{v}\nu).$$

\begin{lemma}\label{Lemmadiamond}  For any operations $\lambda\in \PP_{\ell+1}, \mu \in \PP_{m+1}, \nu \in \PP_{n+1}$  one has 
$$(\lambda\circ_{s}\mu) \circ_{t}\nu =  \lambda\circ_{u}(\mu \circ_{v}\nu).$$
\end{lemma}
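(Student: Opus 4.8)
The identity to be established is essentially a reformulation of the associativity of substitution proved in \ref{substitution} (and already packaged in Proposition \ref{propMonad}). The plan is to trace through the two bracketings of the surjection $r:\u{k}\to \u{3}$ and verify that both decorated composites yield the \emph{same} decorated surjective map, namely $(r;\nu,\mu,\lambda)$ read off from the three vertices of $r$.

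First I would make precise how a decorated surjection $(r;\nu,\mu,\lambda)$ in $\PPP(M)_{k+1}$ (with $M$ the underlying arity-graded module of $\PP$, or working directly in $\PP$ via $\Gamma_{\PP}$) sits inside a two-fold composition. The left-hand side $(\lambda\circ_{s}\mu)\circ_{t}\nu$ is obtained as follows: the surjection $s:\u{m+\ell}\to\u{2}$ carries the two-vertex decoration $(\mu,\lambda)$, giving an element of $\PPP(\PP)_{m+\ell+2}$ which maps under $\Gamma_{\PP}$ (equivalently, under substitution) to an operation sitting at vertex $2$ of $t:\u{n+m+\ell}\to\u{2}$, whose vertex $1$ carries $\nu$. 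Applying $\Gamma_{\PP}$ again realizes this as the substitution $(t;\nu,\ (s;\mu,\lambda))$. By associativity of substitution (\ref{substitution}) this equals the iterated substitution that first expands $s$ inside $t$ at vertex $2$; and one checks this iterated substitution \emph{is} exactly $r$, with vertices $1,2,3$ decorated by $\nu,\mu,\lambda$ — this is the content of the left-hand picture in the excerpt, where $\u{t^{-1}(2)}$ is further partitioned by $s$. Symmetrically, the right-hand side $\lambda\circ_{u}(\mu\circ_{v}\nu)$ corresponds to $(u;\ (v;\nu,\mu),\ \lambda)$, which by the same associativity is the iterated substitution expanding $v$ inside $u$ at vertex $1$, and this also reconstructs $r$ with the same decoration, matching the right-hand picture.

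The key step is therefore a purely combinatorial bookkeeping check: both ways of regarding $r$ as a two-step substitution of size-$2$ surjections recover $r$ on the nose, together with its vertex decorations $\nu,\mu,\lambda$ in the correct positions. This is precisely the associativity clause of Proposition \ref{propMonad} applied to a surjection with three vertices, so formally one may simply invoke $\Gamma\circ(\PPP\Gamma)=\Gamma\circ(\Gamma\PPP)$ (the commuting pentagon-type square in \ref{defPermutad}) evaluated on the element of $\PPP(\PPP(\PP))$ supported on $r$'s decomposition; the equality of the two composites then follows because $\Gamma_{\PP}$ is an algebra structure map over the monad, hence compatible with $\Gamma$.

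The main obstacle — really the only subtlety — is the indexing and ordering of the three blocks: one must confirm that in both decompositions the arities match ($n+1,m+1,\ell+1$ at vertices $1,2,3$), that the intermediate size-$2$ surjections $s,t$ on the left and $v,u$ on the right have the stated sources $\u{m+\ell},\u{n+m+\ell}$ and $\u{n+m},\u{n+m+\ell}$, and that the relabelling of inputs induced by substitution (the shift $m_1+\cdots+m_{j-1}$ in the formula of \ref{substitution}) is consistent on both sides. Once the two composites are identified with the single decorated surjection $(r;\nu,\mu,\lambda)$, the lemma is immediate; I would present the argument as ``both sides equal $(r;\nu,\mu,\lambda)$ by associativity of substitution,'' referring the reader to the two pictures for the explicit block decompositions.
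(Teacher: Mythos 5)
Your proposal is correct and follows essentially the same route as the paper: the paper's proof likewise observes that both composites equal the single surjection $r$ with vertices $1,2,3$ decorated by $\nu,\mu,\lambda$, and invokes the associativity of substitution (equivalently, the monad-algebra axiom of Proposition \ref{propMonad}). Your additional bookkeeping on arities and input relabelling is a more explicit spelling-out of what the paper leaves as "an immediate consequence."
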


\begin{proo} Both terms of the expected equality are equal to the surjective map obtained by decorating the vertex $3$ by $\lambda $, $2$ by $\mu $ and  $1$ by $\nu $ in $r$. This is an immediate consequence the associativity property of substitution for surjective maps.
\end{proo}

\subsection{Partial presentation of a permutad}\label{partialOp} Let $(\PP, \Gamma, \iota)$ be a permutad (so we suppose $\PP_{1}=\KK\, \id$). Any surjection $t$ with target set of size $2$ (i.e.\ $t:\u{m+n} \to \u{2}$) determines a linear map that we denote by
$$\circ_{t}: \PP_{m+1}\t \PP_{n+1}\to \PP_{m+n+1},$$
where $n= \#t^{-1}(1)$.

\begin{thm}\label{thmpartial} A permutad $(\PP, \Gamma, \iota)$ is completely determined by the arity graded module $\{\PP_{n}\}_{n\geq 1}$, with $\PP_{1}=\KK$, and the partial operations 
$$\circ_{t}: \PP_{m+1}\t \PP_{n+1}\to \PP_{m+n+1},\quad t:\u{m+n}\to \u{2}, \ n= \#t^{-1}(1),$$
satisfying
$$(\diamondsuit)\qquad (\lambda\circ_{s}\mu) \circ_{t}\nu =  \lambda\circ_{u}(\mu \circ_{v}\nu),$$
for any surjective map $r$ with target $\u{3}$  such that $ (t;c_{n+1},s)=r= (u;v,c_{l+1})$, for $\#r^{-1}(1)=n$ and $\# r^{-1}(3)=l$.
\end{thm}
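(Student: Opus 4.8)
The plan is to prove Theorem \ref{thmpartial} by exhibiting an equivalence between the category of permutads and the category of arity-graded modules equipped with the partial operations $\circ_t$ subject to the relation $(\diamondsuit)$. The forward direction is essentially done: given a permutad $(\PP,\Gamma,\iota)$, the structure map $\Gamma_\PP$ restricts to the partial operations as in \ref{partialOp}, and Lemma \ref{Lemmadiamond} says exactly that these satisfy $(\diamondsuit)$. So the substance is the reconstruction: from the data $(\{\PP_n\}, \{\circ_t\})$ satisfying $(\diamondsuit)$ one must build a genuine permutad structure $\Gamma_\PP : \PPP(\PP) \to \PP$ and check it is the unique one inducing the given partial operations.

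First I would set up the reconstruction. The key combinatorial input, already flagged in the text preceding the theorem, is that every surjection $t \in \Sur(\u{n},\u{k})$ is obtained by iterated substitution of surjections with target of size $2$; equivalently, a decorated surjection $(t;\mu_1,\dots,\mu_k) \in \PPP(\PP)_{n+1}$ can be written as an iterated application of partial operations $\circ_s$ to the decorations. So I would define $\Gamma_\PP(t;\mu_1,\dots,\mu_k)$ by choosing such a factorization of $t$ into binary surjections and composing the $\circ_s$'s accordingly. The main obstacle — and the only real content of the proof — is \textbf{well-definedness}: different factorizations of the same $t$ into binary surjections must yield the same element of $\PP$. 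I would handle this by observing that any two factorizations are connected by a sequence of local moves, and each elementary local move is precisely an instance of relation $(\diamondsuit)$ (this is the role of the surjection $r$ with target $\u{3}$ in the statement: it records the two ways a height-two portion of a factorization tree can be read off). Thus one reduces the coherence to a ``diamond'' / confluence argument: it suffices to check that the relation $(\diamondsuit)$ generates all identifications among factorizations, which follows from the associativity of substitution of surjective maps established in \ref{substitution}, together with a standard argument that local confluence plus the global associativity of substitution gives global coherence.

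Next I would verify that the $\Gamma_\PP$ so defined is associative and unital, i.e.\ makes the two diagrams of \ref{defPermutad} commute. Associativity of $\Gamma_\PP$ is inherited from associativity of substitution of surjective maps (\ref{substitution}): both $\Gamma_\PP \circ \PPP(\Gamma_\PP)$ and $\Gamma_\PP \circ \Gamma(\PP)$ applied to a twice-decorated surjection amount to composing all the decorations along a single iterated-substitution factorization, and any two such factorizations agree by the well-definedness already proved. Unitality is immediate: substituting a corolla changes neither the surjection nor, by the unit axiom $\iota$, the decorations, so $\Gamma_\PP(c_n;\id,\dots) $ and $\Gamma_\PP(\cdot\,;\,c_{|v|},\dots)$ act as identities. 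Finally, uniqueness: any permutad structure on $\{\PP_n\}$ inducing the given $\circ_t$ must agree with $\Gamma_\PP$ on every $(t;\mu_1,\dots,\mu_k)$, since $t$ factors through binary surjections and the structure map is required to be compatible with $\Gamma$, which forces its value to be the corresponding iterated partial composition — i.e.\ exactly $\Gamma_\PP$.

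In summary, the proof has three movements: (i) the partial operations of a permutad satisfy $(\diamondsuit)$ — immediate from Lemma \ref{Lemmadiamond}; (ii) conversely, data $(\{\PP_n\},\{\circ_t\})$ satisfying $(\diamondsuit)$ assemble into a well-defined, associative, unital $\Gamma_\PP$ — the well-definedness being the crux, settled by the confluence/coherence argument driven by $(\diamondsuit)$ and the associativity of substitution; (iii) these two passages are mutually inverse and natural, giving the claimed equivalence. I expect essentially all the difficulty to sit in step (ii)'s coherence argument; everything else is a direct unwinding of the monad axioms already recorded in \ref{propMonad} and \ref{defPermutad}.
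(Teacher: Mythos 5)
Your division of labour is right---the forward direction is exactly Lemma \ref{Lemmadiamond}, and all the content lies in reconstructing $\Gamma_{\PP}$ from the $\circ_{t}$---but your treatment of what you yourself identify as the crux, well-definedness, is where the proposal falls short of a proof. The claim that ``any two factorizations are connected by a sequence of local moves, each an instance of $(\diamondsuit)$'' is precisely the statement that any two binarizations of a surjection $t:\u{n}\to\u{k}$ (equivalently, any two planar binary trees on the ordered set of $k$ vertices, recording the order in which the vertices are split off) are connected by elementary rotations, each rotation being read off from a $3$-vertex surjection $r$ as in the statement. This is true---it is the connectivity of the rotation graph on binary trees, i.e.\ Mac Lane's coherence argument (every binary tree is connected by rotations to a comb)---but it requires an argument, and it does \emph{not} follow from the associativity of substitution of surjective maps cited from \ref{substitution}. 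Associativity of substitution guarantees that every binarization of $t$ composes back to $t$; it says nothing about whether the corresponding iterated partial compositions in $\PP$ coincide, which is exactly what is at stake. As written, the central claim is asserted and then justified by an irrelevant fact. Once that combinatorial lemma is supplied, the rest of your plan does go through: well-definedness makes the axiom $\Gamma_{\PP}\circ\PPP(\Gamma_{\PP})=\Gamma_{\PP}\circ\Gamma(\PP)$ essentially automatic, since both sides evaluate the total decorated surjection along some binarization, and uniqueness is immediate.

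For comparison, the paper sidesteps the confluence issue entirely: it fixes one canonical binarization---peel off the last vertex, writing $t=(\tilde{t};v_{t},c_{|r|})$---defines $\Gamma$ by the resulting recursion $\Gamma(t;\lambda_{1},\dots,\lambda_{r})=\lambda_{r}\circ_{\tilde{t}}\Gamma(v_{t};\lambda_{1},\dots,\lambda_{r-1})$, and then verifies the monad-algebra axiom directly by a double induction on $r$ and $k_{r}$, invoking $(\diamondsuit)$ once at the pivotal step. That trades your coherence lemma for a longer but elementary induction in which well-definedness is never in question. Your route, with the rotation-connectivity lemma actually proved, would be conceptually cleaner (associativity comes nearly for free); in its present form, however, the proof of the decisive step is missing.
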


\begin{proo} Let us start with a permutad $(\PP, \Gamma, \iota)$. We define the partial operations  by $\mu\circ_{t}\nu := \Gamma(t ; \nu, \mu)$. 
Formula $(\diamondsuit)$ has been proved in Lemma \ref{Lemmadiamond}.

On the other hand, let us start with the arity graded module $\PP$ and the partial operations $\circ_t$, for any surjective map $t$, satisfying $(\diamondsuit )$. 

For any surjective map $t:{\underline n}\longrightarrow {\underline r}$, with $r\geq 2$, we introduce new maps $v_t$ and ${\tilde {t}}$ as follows:\begin{enumerate}
\item  $v_t:{\underline {n_1}}\longrightarrow {\underline {r-1}}$ is  the surjective map given by $v_t(i):=t(s_i),$
where $n_1:=n-\#t^{-1}(r)$ and ${\underline n}\setminus t^{-1}(r)=\{ s_1<\dots <s_{n_1}\}$.
\item ${\tilde t}:{\underline n}\longrightarrow {\underline 2}$ is given by the formula: $${\tilde t}(i)=\begin{cases}1,&\ {\it for}\ t(i)<r,\\
2,&\ {\it for}\ t(i)=r.\end{cases}$$\end{enumerate}

For $r>2$, we get that $t=({\tilde t}; v_t, c_{|r|})$, with $|r| =\# t^{-1}(r)+1.$
\medskip

We define $\Gamma (t;\lambda _1,\dots ,\lambda _r)$ for any surjection $t:{\underline n}\longrightarrow {\underline r}$ and any family of elements $\lambda _i\in \PP _{\# t^{-1}(i) +1}$, $1\leq i\leq r$ as follows:

If  $r=1$, then $t=c_{n+1}$. So, $\Gamma (t;\lambda _1):=\lambda _1$.

For $r=2$, we apply the partial operation $\circ _t$ to get $\Gamma (t;\lambda _1,\lambda _2):=\lambda _2\circ _{t}\lambda _1$.

For $r>2$ and $t=({\tilde t}; v_t, c_{|r|})$,  we define $$\Gamma (t;\lambda _1,\dots ,\lambda _r):=\lambda _r\circ _{\tilde {t}}\Gamma (v_t;\lambda _1,\dots ,\lambda _{r-1}).$$

In order to check that $\PP $ is a permutad, we have to verify that $\Gamma$ satisfies the equality:
$$\Gamma ((t;t_1,\dots ,t_r);\lambda _1^1,\dots \lambda _{k_1}^1,\dots ,\lambda _{k_r}^r)=\Gamma (t;\Gamma (t_1;\lambda _1^1,\dots ,\lambda _{k_1}^1),\dots,
\Gamma (t_r;\lambda _1^r,\dots ,\lambda _{k_r}^r)),$$
for any family of surjective maps $t:{\underline n}\longrightarrow {\underline r}$ and $t_i:{\underline {\#t^{-1}(i)}}\longrightarrow {\underline {k_i}}$, for $1\leq i\leq r$, and any collection of elements $\{\lambda _i^j\in \PP_{k_j+1}\mid 1\leq i\leq k_j\ {\rm and}\ 1\leq j\leq r\}$. 
\bigskip

To prove it we  use a double recursive argument on $r$ and on $k_r$.

If $r=1$, the result is evident because $\Gamma ((t;t_1),\lambda )=\Gamma (t_1;\lambda )=\Gamma (t: \Gamma (t_1;\lambda))$.
\medskip

For $r\geq 2$, we have:
\begin{enumerate}
\item  $t=({\tilde t};v_t,c_{|r|})$, 
\item $t_r=({\tilde {t_r}}; v_{t_r},c_{|k_r|})$.   \end{enumerate} 

Let us denote by $w$ the element $(t;t_1,\dots ,t_r)$. The map ${\tilde {w}}:{\underline n}\longrightarrow {\underline 2}$ is given by:
$${\tilde w}(i)=\begin{cases}1,&\ {\rm if}\ t(i)<r,\ {\rm or\ if}\ i=s_j^r\in t^{-1}(r)\ {\rm and}\ t_r(j)<k_r,\\
2,&\ {\rm if}\ i=s_j^r\in t^{-1}(r)\ {\rm and}\ t_r(j)=k_r,\end{cases}$$
where $t^{-1}(r)=\{s_1^r<\dots <s_{\#t^{-1}(r)}^r\}$. So, we get $w=({\tilde {w}}, v_w,c_{|k_r|})$, with:
$$v_w(i)=t_l(j),\ {\rm if}\ i=s_j^l\in t^{-1}(l),$$
where $t^{-1}(l)=\{ s_1^l<\dots <s_{\#t^{-1}(l)}^l\}$, for $1\leq l\leq r$.

If $k_r=1$,  then $t_r=c_{|r|}$, ${\tilde w}={\tilde t}$ and $v_w=v_t$.  So, applying the recursive hypothesis on $r$, we have:
$$\displaylines{
\Gamma((t;t_1,\dots ,t_r),\lambda _1^1,\dots ,\lambda _1^r)=\lambda _1^r\circ _{\tilde t} \Gamma ((v_t;t_1,\dots ,t_{r-1});\lambda _1^1,\dots ,\lambda _{k_{r-1}}^{r-1})=\hfill\cr
\lambda _1^r\circ _{\tilde t} \Gamma (v_t;\Gamma (t_1;\lambda _1^1,\dots ,\lambda _{k_1}^1),\dots ,\Gamma (t_{r-1};\lambda _1^{r-1},\dots ,\lambda _{k_{r-1}}^{r-1}))=\cr
\hfill \Gamma (t;\Gamma (t_1;\lambda _1^1,\dots ,\lambda _{k_1}^1),\dots ,\Gamma (t_{r-1};\lambda _1^{r-1},\dots ,\lambda _{k_{r-1}}^{r-1}),\Gamma (t_r;\lambda _1^r)).\cr}$$
\medskip

For $k_r>1$, we have:
$$\displaylines {
\Gamma((t;t_1,\dots ,t_r),\lambda _1^1,\dots ,\lambda _{k_r}^r)=\Gamma (({\tilde w};v_w,c_{|k_r|});\lambda _1^1,\dots ,\lambda _{k_r}^r)=\hfill\cr
\lambda _{k_r}^r\circ _{\tilde w} \Gamma (v_w;\lambda _1^1,\dots ,\lambda _{k_r-1}^r).\cr }$$

The set $t^{-1}(\{1,\dots ,r-1\})\subset w^{-1}(\{1,\dots ,k_1+\dots +k_r-1\})=\{s_1<\dots <s_p\}$, and it is not difficult to see that $$v_w=(u; t_1,\dots ,t_{r-1},{\hat {t_r}}),$$
where $u(i)=t(s_i)$, for $1\leq i\leq p$. Moreover, if $t_r^{-1}(\{ 1,\dots ,k_r-1\})=\{l_1<\dots <l_q\}$, then ${\hat {t_r}}(j)=t_r(l_j)$, for $1\leq j\leq q$.
\medskip

Let $u=({\tilde {u}}; v_u, c_{q+1})$. A straightforward computation shows that:\begin{enumerate}
\item $({\tilde {w}};{\tilde {u}},c_{\#t_r^{-1}(k_r)})=({\tilde {t}}; c_{n-\#t^{-1}(r)},{\tilde {t_r}}),$
\item $v_u=v_t$.\end{enumerate}
\medskip

Now, the recursive argument on $r$ and $k_r$, implies that:
$$\displaylines {
\Gamma (v_w;\lambda _1^1,\dots ,\lambda _{k_r-1}^r)=\hfill\cr
\Gamma (u;\Gamma (t_1;\lambda _1^1,\dots ,\lambda _{k_1}^1),\dots ,\Gamma (t_{r-1};\lambda _1^{r-1},\dots ,\lambda _{k_{r-1}}^{r-1}),\Gamma ({\hat {t_r}};\lambda _1^r,\dots ,\lambda _{k_r-1}^r))=\cr
\Gamma ({\hat {t_r}};\lambda _1^r,\dots ,\lambda _{k_r-1}^r)\circ {\tilde{u}}\Gamma (v_t;\Gamma (t_1;\lambda _1^1,\dots ,\lambda _{k_1}^1),\dots ,\Gamma (t_{r-1};\lambda _1^{r-1},\dots ,\lambda _{k_{r-1}}^{r-1})).\cr }$$
\medskip

Since $({\tilde {w}};{\tilde {u}},c_{\#t_r^{-1}(k_r)})=({\tilde {t}}; c_{n-\#t^{-1}(r)},{\tilde {t_r}})$, condition $(\diamondsuit )$, states that:
$$\displaylines {
\lambda _{k_r}^r\circ _{\tilde w} (\Gamma ({\hat {t_r}};\lambda _1^r,\dots ,\lambda _{k_r-1}^r)\circ _{\tilde{u}}\Gamma (v_t;\Gamma (t_1;\lambda _1^1,\dots ,\lambda _{k_1}^1),\dots ,\Gamma (t_{r-1};\lambda _1^{r-1},\dots ,\lambda _{k_{r-1}}^{r-1})))=\hfill\cr
(\lambda _{k_r}^r\circ _{\tilde {t_r}} \Gamma ({\hat {t_r}};\lambda _1^r,\dots ,\lambda _{k_r-1}^r))\circ _{\tilde{t}}\Gamma (v_t;\Gamma (t_1;\lambda _1^1,\dots ,\lambda _{k_1}^1),\dots ,\Gamma (t_{r-1};\lambda _1^{r-1},\dots ,\lambda _{k_{r-1}}^{r-1}))=\cr
\Gamma (t_r;\lambda _1^r,\dots ,\lambda _{k_r}^r)\circ _{\tilde{t}}\Gamma (v_t;\Gamma (t_1;\lambda _1^1,\dots ,\lambda _{k_1}^1),\dots ,\Gamma (t_{r-1};\lambda _1^{r-1},\dots ,\lambda _{k_{r-1}}^{r-1}))=\cr
\hfill\Gamma (t; \Gamma (t_1;\lambda _1^1,\dots ,\lambda _{k_1}^1),\dots ,\Gamma (t_r;\lambda _1^r,\dots ,\lambda _{k_r}^r),\cr }$$
which ends the proof.
\end{proo}

\subsection{The partial operations $\circ_i$} Let now $t:\u{m+n}\to \u{2}$ be such that the inverse image of the vertex $1$ is made of $n$ consecutive elements $t^{-1}(1)=\{i, i+1, \ldots , i+n-1\}$. So, once $m$ and $n$ have been chosen, $t$ is completely determined by the integer $i$. We will sometimes denote it by $\circ_{i}$ instead of $\circ_{t}$ because it has properties  similar to the partial operations in the operad framework (see \cite{LV} or \cite{MSS}). More precisely the permutadic operation $\circ_{i}:\PP_{m+1}\t \PP_{n+1} \to \PP_{m+n+1}$ is similar to the operadic operation which corresponds to the tree obtained by grafting a corolla on the $i$th leaf of another corolla:

${\xymatrix@R=8pt@C=4pt{
1&&i& &i+n-1&&m+n\\
\bullet\ar@{-}[rrrrddd]&\cdots&\bullet\ar@{-}[ddd]&\cdots &\bullet\ar@{-}[llddd]&\cdots &\bullet\ar@{-}[llddd]\\
&&&&&&\\
&&&&&&\\
&&\ \times \nu&&\ \times \mu&&\\
&&1&&2&&\\
&&&\textrm{surjection}&&&
}}$\qquad 
${\xymatrix@R=8pt@C=8pt{
&&&&&&&\\
*{}\ar@{-}[rrrddd]&*{}\ar@{-}[rrddd]&*{}\ar@{-}[rd]&*{}\ar@{-}[d]&*{}\ar@{-}[ld]  &&*{}\ar@{-}[lllddd]\\
&&&\nu\ar@{-}[dd]&&&\\
&&&\quad i&&&\\
&&&\mu\ar@{-}[d]&&&\\
&&&*{}&&&\\
&&&\textrm{tree}&&
}}$

When $n=2$ the surjective maps (that is the permutations) $1_{2}$ and $(12)$ correspond respectively to the operations $\circ_{1}$ and $\circ_{2}$. For $n=3$ there is only one surjective map $\u{3} \to \u{2}$ which is not of the type $\circ_{i}$, it is
$$\xymatrix@R=8pt@C=8pt{
\bullet \ar@{-}[rdd]& & \bullet  \ar@{-}[rdd]& &\bullet \ar@{-}[llldd]\\
&&&&\\
 & \times & & \times & \\
 }$$
Under the bijection between surjective maps and the cells of the permutohedron, it corresponds to the dotted arrow in the hexagon (see Figure \ref{figuregraphtrees}).

\begin{prop}\label{sequentialaxiom} The  partial operations $\circ_i$  in a permutad $\PP$ satisfy the \emph{sequential composition relation}:

 \begin{displaymath}
\begin{array}{crcll}
&(\lambda \circ_i \mu)\circ_{i-1+j}\nu  &=& \lambda \circ_i (\mu\circ_{j}\nu ), &\mathrm{for }\ 1\leq i\leq l, 1\leq j\leq m,  \\
\end{array}
\end{displaymath}
for any $\lambda \in \PP_l, \mu\in \PP_m, \nu\in \PP_n$.
\end{prop}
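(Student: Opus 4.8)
The plan is to deduce the sequential composition relation as a special case of the diamond relation $(\diamondsuit)$ from Theorem~\ref{thmpartial}. The key observation is that the operations $\circ_i$ are just the operations $\circ_t$ for the particular surjections $t:\u{m+n}\to\u{2}$ whose fiber over $1$ is a block of consecutive integers; so the relation we want should come from the associativity of substitution for surjective maps, exactly as in Lemma~\ref{Lemmadiamond}, once we identify the right surjection $r:\u{k}\to\u{3}$.

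First I would set up the correct $r$. Given $\lambda\in\PP_l$, $\mu\in\PP_m$, $\nu\in\PP_n$ and indices $1\le i\le l$, $1\le j\le m$, consider the surjection $r:\u{(l-1)+(m-1)+(n-1)}\to\u{3}$ which, reading inputs left to right, sends the first $i-1$ inputs to vertex $3$, then the next $j-1$ inputs to vertex $2$, then $n-1$ inputs to vertex $1$, then $m-j$ more inputs to vertex $2$, and finally $l-i$ more inputs to vertex $3$; so vertex $1$ has arity $n$, vertex $2$ has arity $m$, vertex $3$ has arity $l$ (matching $\lambda,\mu,\nu$ on vertices $3,2,1$ in the notation of \ref{thmpartial}). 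This $r$ is exactly a "nested consecutive" surjection: the fiber over $1$ is a consecutive block sitting inside the union of the fibers over $1$ and $2$, which is itself a consecutive block inside $\u{n'}$.

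Next I would compute the two factorizations of $r$ required by $(\diamondsuit)$. Factoring $r$ so as to produce vertices $1$ and $2$ first: the "inner" surjection $v$ with two vertices is the one that isolates the $n-1$ inputs of vertex $1$ as a consecutive block within the $m+n-1$ inputs of the combined vertex — this is precisely $\circ_j$ acting on $\PP_m\t\PP_n$; and the "outer" surjection $u$ isolating the resulting combined vertex inside all inputs is $\circ_i$ acting on $\PP_l\t\PP_{m+n-1}$. Hence the right-hand side of $(\diamondsuit)$, $\lambda\circ_u(\mu\circ_v\nu)$, reads $\lambda\circ_i(\mu\circ_j\nu)$. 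Factoring $r$ the other way, producing vertices $2$ and $3$ first: the surjection $s$ isolates the $n-1$ inputs of vertex $1$ within the inputs of the combined vertex formed from $2$ and $1$; but that combined vertex has $m$ below it and the block of $n-1$ starts at position $j$ inside it, so $s=\circ_j$ on $\PP_m\t\PP_n$ giving an element of $\PP_{m+n-1}$, wait — rather, in the orientation of $(\diamondsuit)$ where $\mu$ sits at vertex $2$ and $\nu$ at vertex $1$, the combined operation $\mu\circ_s\nu$ is again $\mu\circ_j\nu\in\PP_{m+n-1}$; and then $t$ grafts this combined vertex into $\lambda$ as the block occupying positions $i$ through $i+(m+n-1)-1$, i.e. $t=\circ_{i-1+\text{(something)}}$. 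Here I need to be careful: because vertex $1$'s block starts at position $i-1+j$ when measured among \emph{all} inputs of $\lambda\circ_i\mu$, the composite $(\lambda\circ_s\mu)\circ_t\nu$ of $(\diamondsuit)$ becomes $(\lambda\circ_i\mu)\circ_{i-1+j}\nu$. Checking this index bookkeeping carefully — that the block of $\nu$ sits at position $i-1+j$ in $\lambda\circ_i\mu$ — is the one genuinely delicate point, and it is just the statement that $\mu$'s inputs occupy positions $i,\dots,i+m-1$ in $\lambda\circ_i\mu$, so the $j$th of them is at position $i+j-1$.

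The main obstacle is therefore not conceptual but notational: verifying that the single surjection $r$ described above really does admit the two factorizations $(t;c_{n+1},s)=r=(u;v,c_{l+1})$ with $s,t,u,v$ equal to the consecutive-block surjections named by the subscripts $j$, $i-1+j$, $i$, $j$ respectively. Once that identification is made, $(\diamondsuit)$ applied to $r$ with decorations $\lambda$ at $3$, $\mu$ at $2$, $\nu$ at $1$ yields the desired equality immediately, and the constraints $1\le i\le l$, $1\le j\le m$ are exactly what is needed for all four surjections to be well defined. I would present the proof by drawing (or describing in words) the surjection $r$ and its two decompositions, then invoking Theorem~\ref{thmpartial}.
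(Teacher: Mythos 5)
Your proposal is correct and follows exactly the paper's route: the paper's entire proof is the one-line observation that this is the special case of $(\diamondsuit)$ from Theorem \ref{thmpartial} obtained by taking $r$ to be the surjection onto $\u{3}$ with nested consecutive fibers, which is precisely the $r$ you construct. Your index bookkeeping reaches the right conclusion (the $j$th input of $\mu$ sits at global position $i+j-1$), though note a small slip in the prose: with the paper's arity shift, $\mu\in\PP_m$ occupies positions $i,\dots,i+m-2$ in $\lambda\circ_i\mu$, not $i,\dots,i+m-1$.
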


\begin{proo} This is a particular case of the formula $(\diamondsuit)$ in Theorem \ref{thmpartial}.
\end{proo}
Observe that, in a permutad, the partial operations $\circ_i$ do not necessarily satisfy the parallel composition relation (see \cite{LV}, Chapter 5).

\subsection{Nonsymmetric operad} Let us recall that a nonsymmetric operad (we often write ns operad for short) can be defined as we defined a permutad but with planar trees in place of surjections. Here we consider only the trees which have at least two inputs at each vertex. The monad of planar trees is denoted by $P\TT$ and a ns operad is an algebra over this monad (cf.\ for instance the combinatorial definition of a ns operad in \cite{LV} section 5.8.5). 

A ns operad can also be described by means of the partial operations $\circ_i$ which are required to satisfy, not only the sequential composition axiom $(\diamondsuit)$ but also the parallel composition axiom which reads:
\begin{displaymath}
\begin{array}{rcll}
(\lambda \circ_i \mu)\circ_{k-1+m}\nu  &=& (\lambda \circ_k\nu)\circ_{i}\mu , &\mathrm{for }\  1\leq i < k\leq l, \\
\end{array}
\end{displaymath}
for any $\lambda \in \PP(l), \mu\in \PP(m), \nu\in \PP(n)$. 

\subsection{Pre-permutad} We define a \emph{pre-permutad} as an arity-graded module $\PP$ with $\PP_1=\KK \id$ equipped with partial operations $\circ_i$ satisfying the sequential composition axiom $(\diamondsuit)$. From the previous discussion it follows that there are two forgetful functors
$$ \{\textrm{ns operads}\} \to  \{\textrm{pre-permutads}\} \leftarrow  \{\textrm{permutads}\} .$$
The notion of pre-permutad appeared first in \cite{Ronco11} under the name \emph{pre-shuffle algebra}.

We will see in section  \ref{BinaryPerm} that in the binary case we can construct a more direct relationship between ns operads and permutads.

\section{Shuffle algebras}

We make explicit the bijection between surjections and shuffles. Under this bijection we show that the notion of permutad (resp.\ pre-permutad) is equivalent to the notion of shuffle algebra (resp.\ pre-shuffle algebra) introduced previously by the second author in \cite{Ronco11}.

\subsection{Shuffles}\label{shuflle} By definition an \emph{$(i_{1}, \ldots , i_{k})$-shuffle} in $\Sy_{n}$, $n=i_{1}+\cdots + i_{k}$,  is a permutation $\ss$ such that for any $j=1, \ldots, k$ one has
$$\ss(i_{1}+\cdots +i_{j-1}+1)<\ss( i_{1}+ \cdots +i_{j-1}+2)<  \ldots < \ss( i_{1}+ \cdots +i_{j-1}+i_{j}).$$
For instance the $(1,2)$-shuffles are $[1|2,3], [2|1,3],[3|1,2]$. Following Stasheff let us call \emph{unshuffle} the inverse of a shuffle. So the  $(1,2)$-unshuffles are $[1,2,3], [2,1,3],[2,3,1]$.

\begin{lemma}\label{lemmasurjshuffle} There is a bijection between the set of shuffles $\Sh(i_{1}, \ldots , i_{k})\subset \Sy_{n}$ and the subset of $\Sur(\u{n}, \u{k})$ made of surjective maps $t:\u{n}\to \u{k}$ such that $i_{j}= \# t^{-1}(j)$.
\end{lemma}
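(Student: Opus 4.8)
The plan is to exhibit an explicit bijection $\Phi : \Sh(i_{1},\ldots,i_{k}) \to \{t \in \Sur(\u{n},\u{k}) : \# t^{-1}(j) = i_{j}\}$ and check it is inverse to an equally explicit map in the other direction. First I would recall that a surjection $t:\u{n}\to\u{k}$ with prescribed fibre sizes $\# t^{-1}(j)=i_{j}$ is the same data as an ordered set partition of $\u{n}$ into blocks $B_{1},\ldots,B_{k}$ with $\#B_{j}=i_{j}$, namely $B_{j}=t^{-1}(j)$. So it suffices to biject $(i_{1},\ldots,i_{k})$-shuffles with such ordered set partitions.

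Given a shuffle $\ss \in \Sh(i_{1},\ldots,i_{k})$, define $B_{j} := \{\ss(i_{1}+\cdots+i_{j-1}+1),\ldots,\ss(i_{1}+\cdots+i_{j-1}+i_{j})\}$, i.e.\ the image under $\ss$ of the $j$th consecutive block of $\u{n}$; equivalently set $t(a):=j$ where $j$ is determined by $\ss^{-1}(a) \in \{i_{1}+\cdots+i_{j-1}+1,\ldots,i_{1}+\cdots+i_{j}\}$. These $B_{j}$ partition $\u{n}$ because $\ss$ is a bijection, and $\#B_{j}=i_{j}$ by construction, so $\Phi(\ss):=t$ lands in the right set. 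Conversely, given such a $t$, list each fibre $t^{-1}(j)$ in increasing order as $t^{-1}(j)=\{b^{j}_{1}<\cdots<b^{j}_{i_{j}}\}$ and define $\Psi(t)=\ss$ by $\ss(i_{1}+\cdots+i_{j-1}+p):=b^{j}_{p}$. The key point is that $\ss$ is indeed a shuffle: on the $j$th consecutive block of $\u{n}$ its values $b^{j}_{1}<\cdots<b^{j}_{i_{j}}$ are increasing by definition, which is exactly the shuffle condition. Finally I would check $\Phi$ and $\Psi$ are mutually inverse: starting from $\ss$, the fibre $t^{-1}(j)$ of $t=\Phi(\ss)$ is $\{\ss(i_{1}+\cdots+i_{j-1}+1),\ldots\}$, and listing it in increasing order and feeding it back through $\Psi$ recovers $\ss$ on that block precisely because $\ss$ restricted to that block is already increasing; the reverse composition is identical.

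There is essentially no hard step here — the statement is a repackaging of "surjection with given fibre sizes $=$ ordered set partition $=$ shuffle", and the only thing to be careful about is bookkeeping with the offsets $i_{1}+\cdots+i_{j-1}$ and the convention (present in the excerpt via "Following Stasheff") that a shuffle has increasing runs on the source blocks rather than on the target blocks, so that one should not accidentally biject with unshuffles instead. If anything is the "main obstacle", it is just making sure the direction of the bijection matches the one the authors want to use in the subsequent identification of permutads with shuffle algebras; I would state $\Phi$ so that the block $t^{-1}(j)$, read in increasing order, is the $j$th increasing run of $\ss$, and remark that the inverse assignment sends $t$ to the unshuffle-type reading only if one transposes, which we do not.
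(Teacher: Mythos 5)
Your proposal is correct and follows essentially the same route as the paper: the paper's proof also sends a surjection $t$ to the permutation obtained by concatenating the increasing listings of the fibres $t^{-1}(1),\ldots,t^{-1}(k)$, observes that the result is an $(i_1,\ldots,i_k)$-shuffle precisely because each fibre is listed in increasing order, and leaves the inverse check as immediate. Your extra care about the shuffle-versus-unshuffle convention is well placed but does not change the argument.
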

\begin{proo} Starting with  a surjective map $t$ we construct a sequence of integers $\ss(1), \ldots , \ss(n)$ as follows: let $t^{-1}(j)=\{l_1^j<\dots <l_{i_j}^j\}$ be the inverse image of $j$ by $t$, for $1\leq j\leq k$. The sequence 
defines a permutation $\ss _t^{-1}$ whose image is:
$$(\ss _t^{-1}(1),\dots ,\ss _t^{-1}(n)):=(l_1^1,\dots ,l_{i_1}^1,l_1^2,\dots ,l_1^{k},\dots ,l_{i_k}^k),$$
 which is a $(i_{1}, \ldots , i_{k})$-shuffle by construction. It is immediate to check that we have a bijection as expected.\end{proo}

\textsf{ Example:}

surjective map $\u{5}\to \u{3}$ \hskip2cm $(3,2)$-shuffle \hskip2cm $(3,2)$-unshuffle

\bigskip

$\xymatrix{
  *{\bullet}\ar@{-}[d] & *{\bullet}\ar@{-}[drr] & *{\bullet}\ar@{-}[dll] & *{\bullet}\ar@{-}[dlll] & *{\bullet}\ar@{-}[dl]\\
  *{\times}& &&*{\times}& &      \\
  }$
 $[1,3,4 | 2,5]$\hskip 2.5cm $[1, 4, 2, 3, 5]$\ .
 
 \bigskip
 
Given a pair of permutations $(\ss ,\tau)\in \Sy_n\times \Sy_m$ there is a natural way to construct the concatenation $\ss\times \tau$ of them, which is an element of $\Sy_{n+m}$. This product extends naturally to a product $\times : \Sur(\u{n},\u{k})\times \Sur(\u{m},\u{h})\rightarrow \Sur(\u{n+m},\u{k+h})$, by setting:
$$t\times w(j):=\begin{cases} t(j),&\ {\rm for}\ 1\leq j\leq n\\
w (j-n)+k,&\ {\rm for}\ n+1\leq j\leq n+m.\end{cases}$$ 
The product $\times$ is associative.

A well-known result about shuffles (see for instance \cite{BBHT}), states that:
$$\Sh(i_1+i_2,i_3)\cdot (\Sh(i_1,i_2)\times 1_{\Sy_{i_3}})=\Sh(i_1,i_2,i_3)=\Sh(i_1,i_2+i_3)\cdot (1_{\Sy_{i_1}}\times \Sh(i_2,i_3)),$$
where $1_{\Sy_n}$ denotes the identity of the group $\Sy_n$ and $\cdot $ denotes the usual product in $\Sy_{i_1+i_2+i_3}$.

The paragraph above shows that any $(i_1,\dots ,i_k)$-shuffle $\ss$ may be written, in a unique way, as
$$\ss = \ss_1\cdot (\ss_2\times 1_{\Sy_{i_k}})\cdot \dots \cdot (\ss _{k-1}\times 1_{\Sy_{i_3+\dots +i_k}}),$$
with $\ss _j\in  \Sh(i_1+\dots +i_{k-j}, i_{k-j+1})$.

Note that surjections with target size ${\underline 2}$ correspond to shuffles of type $(i_1,i_2)$.
 
 \begin{prop}\label{shufflealgeb} Let $t:{\underline n}\rightarrow {\underline 2}$ be a surjective map and let $t_j\in \Sur(\u{i}_j,\u{m}_j)$, for $j=1,2$, be such that $\u{i}_j=\# t^{-1}(j)$, we have that,
 $$\ss _{(t;t_1,t_2)}=\ss _t\cdot (\ss _{t_1}\times \ss_{t_2}),$$
 where $\cdot $ denotes the composition of maps.\end{prop}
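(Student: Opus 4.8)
The plan is to verify the identity $\ss_{(t;t_1,t_2)} = \ss_t \cdot (\ss_{t_1} \times \ss_{t_2})$ by tracing through the explicit formulas for substitution (Section \ref{substitution}), for the bijection $w \mapsto \ss_w$ (Lemma \ref{lemmasurjshuffle}), and for the concatenation product $\times$. Concretely, I would write $m := m_1 + m_2$ so that $(t;t_1,t_2) \in \Sur(\u n, \u m)$, and I would compare the two permutations by evaluating their inverses on each $a \in \u n$, since the bijection $w \mapsto \ss_w$ is most naturally described through $\ss_w^{-1}$ (it lists the preimages $t^{-1}(1), t^{-1}(2), \ldots$ in order).

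First I would unwind the left-hand side. Set $r := (t;t_1,t_2)$. By the substitution formula, $r(a) = t_1(b)$ when $t(a)=1$ and $a$ is the $b$th element of $t^{-1}(1)$, and $r(a) = m_1 + t_2(b)$ when $t(a)=2$ and $a$ is the $b$th element of $t^{-1}(2)$. So the vertices $1, \ldots, m_1$ of $r$ are the images under $t_1$ of the elements of $t^{-1}(1)$ (reindexed $1,\ldots,\#t^{-1}(1)$), and the vertices $m_1+1,\ldots,m$ are the $m_1$-shifted images under $t_2$ of $t^{-1}(2)$. Consequently $r^{-1}(\ell)$ for $1 \le \ell \le m_1$ consists of those elements of $t^{-1}(1)$ which, under the order-preserving identification $t^{-1}(1) \cong \u{\#t^{-1}(1)}$, are sent by $t_1$ to $\ell$; and similarly for $m_1 < \ell \le m$ using $t_2$. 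Listing $r^{-1}(1), r^{-1}(2), \ldots, r^{-1}(m)$ in increasing order within each block and concatenating gives the image word of $\ss_r^{-1}$.

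Next I would compute the right-hand side's inverse, $(\ss_t \cdot (\ss_{t_1}\times\ss_{t_2}))^{-1} = (\ss_{t_1}\times\ss_{t_2})^{-1}\cdot \ss_t^{-1} = (\ss_{t_1}^{-1}\times\ss_{t_2}^{-1})\cdot\ss_t^{-1}$, using that concatenation commutes with inversion. Here $\ss_t^{-1}$ lists $t^{-1}(1)$ followed by $t^{-1}(2)$ (each in increasing order); precomposing with $\ss_{t_1}^{-1}\times\ss_{t_2}^{-1}$ then permutes the positions within the first block of size $\#t^{-1}(1)$ according to $\ss_{t_1}^{-1}$ and within the second block according to $\ss_{t_2}^{-1}$. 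Reading off the resulting word, the first block becomes: the elements of $t^{-1}(1)$ reordered so that the $j$th entry is the element whose $t_1$-index is $\ss_{t_1}^{-1}(j)$ — which is exactly the concatenation over $\ell = 1,\ldots,m_1$ of the (increasingly ordered) sets $\{\,a \in t^{-1}(1) : t_1(\text{index of }a) = \ell\,\}$, by definition of $\ss_{t_1}^{-1}$. This matches the description of $r^{-1}(1),\ldots,r^{-1}(m_1)$ obtained above, and the analogous check handles the second block. Hence the two image words coincide, so the two inverse permutations are equal, hence the two permutations are equal.

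The main obstacle is purely bookkeeping: keeping straight the three layers of indexing — the global index $a \in \u n$, the local index $b$ within $t^{-1}(j)$, and the target index — and making sure the order-preserving bijections $t^{-1}(j) \cong \u{\#t^{-1}(j)}$ are applied consistently so that "the $b$th element of $t^{-1}(j)$" on the substitution side lines up with the block structure of $\ss_{t_1}^{-1}\times\ss_{t_2}^{-1}$ on the other side. Once one fixes the convention of comparing $\ss^{-1}$'s rather than $\ss$'s (exploiting that the bijection of Lemma \ref{lemmasurjshuffle} is literally a formula for $\ss^{-1}$, and that $\times$ is compatible with inversion), the identity falls out directly, with no genuine difficulty beyond careful indexing. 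I would also remark that by induction this proposition immediately yields the fully general compatibility $\ss_{(t;t_1,\ldots,t_k)} = \ss_t\cdot(\ss_{t_1}\times\cdots\times\ss_{t_k})$, using the associativity of substitution and of $\times$ together with the decomposition of shuffles recalled just above the statement.
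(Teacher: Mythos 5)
Your proposal is correct and follows essentially the same route as the paper's proof: both reduce the identity to an explicit index-by-index verification using the fact that the bijection of Lemma \ref{lemmasurjshuffle} is given by listing the preimages $t^{-1}(1),\dots,t^{-1}(k)$, i.e.\ by a formula for $\ss_t^{-1}$, and both track the three layers of indexing (global, local within $t^{-1}(j)$, and target) through the substitution formula. The only cosmetic difference is that you phrase the comparison uniformly in terms of the inverse permutations, whereas the paper writes the same block computation directly in its (somewhat loose) $\sigma_t$ notation and concludes by ``composing with $\sigma_t$.''
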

 
 \begin{proo} First, it is easy to check that $\ss _{t_1\times t_2}=\ss _{t_1}\times \ss _{t_2}$.
 
 Let $t^{-1}(1)=\{l_1,\dots ,l_{i_1}\}$ and $t^{-1}(2)=\{ h_1,\dots ,h_{i_2}\}$.  We have that:
 $$(t;t_1,t_2)(i)=\begin{cases} t_1(j),&{\rm for}\ i=l_j,\\
t_2(j)+m_1,&{\rm for}\ i=h_j.\end{cases}$$
Suppose that $t_1^{-1}(k)=\{ s_1^k,\dots ,s_{q_k}^k\}$, for $1\leq k\leq m_1$, and that $t_2^{-1}(k-m_1)=\{ r_1^k,\dots ,r_{p_k}^k\}$, for any $m_1+1\leq k\leq m_1+m_2$.\begin{itemize}
\item If $1\leq i\leq i_1$ is such that $q_1+\dots +q_{k-1}<i\leq q_1+\dots +q_k$, then $\sigma _{(t;t_1,t_2)}(i)=l_{s_{i-q_1-\dots -q_{k-1}}^k}$,
 \item if $i_1+1\leq i\leq n$ is such that $p_1+\dots +p_{k-1}<i-i_1\leq p_1+\dots +p_k$, then $\sigma _{(t;t_1,t_2)}(i)=h_{r_{i-i_1-p_1-\dots -p_{k-1}}^k}$.\end{itemize}
\medskip

On the other hand, note that for $1\leq i\leq i_1$, 

$\sigma _{t_1\times t_2}(i)=s_{i-q_1-\dots -q_{k-1}}^k$, when $q_1+\dots +q_{k-1}<i\leq q_1+\dots +q_k$,

while for $i_1<i\leq n$, 

$\sigma _{t_1\times t_2}(i)=r_{i-i_1-p_1-\dots -p_{k-1}}^k$, when $p_1+\dots +p_{k-1}<i-i_1\leq p_1+\dots +p_k$.
\medskip

Since $$\sigma _t(i)=\begin{cases}l_i,&\ {\rm for}\ 1\leq i\leq i_1,\\
h_{i-i_1},&\ {\rm for}\ i_1<i\leq n,\end{cases}$$ composing with $\sigma_t$ we get the expected result.
\end{proo}

\subsection{Shuffle algebra \cite{Ronco11}}\label{ShuffleAlg}  A \emph{shuffle algebra} is a graded $\KK$-module $A=\bigoplus_{n\geq 0}A_{n}$ such that $A_{0}=\KK\, 1$ endowed with binary operations
$$\bullet _{\gamma }:A_{n}\otimes A_{m}\rightarrow A_{n+m},\ {\rm for}\ \gamma \in \Sh(n,m),$$
verifying:
 $$(\ddag)\qquad x\bullet _{\gamma }(y\bullet _{\delta }z)=(x\bullet _{\sigma}y)\bullet _{\lambda }z,$$
whenever $(1_{n}\times \delta )\cdot \gamma= (\sigma \times 1_{r})\cdot \lambda$ in $\Sh(n,m,r)$. It is also supposed that $1$ is a unit on both sides. Since any $k$-shuffle $\sigma \in \Sh(i_{1}, \ldots , i_{k})$ can be written as a composition of $2$-shuffles, the above relation implies that for any such $\sigma $ there is a well-defined map 
$$\bullet_{\sigma }: A_{i_{1}}\t \cdots \t A_{i_{k}}\to A_{i_{1}+\cdots +i_{k}}.$$

The relationship with permutads is given by the following result.

\begin{prop}\label{equivPermShuffleAlg}  There is an equivalence between permutads $\PP$  and shuffle algebras $A$. It is given by $ \PP_{n+1}= A_{n}$,  $\circ_{t}= \bullet_{\sigma_t}$, where $t$ is a surjective map with target $\u 2$.
\end{prop}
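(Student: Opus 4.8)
The plan is to exhibit a functor in each direction and show they are mutually inverse. Given a permutad $\PP$, set $A_n := \PP_{n+1}$ with $A_0 = \KK\,1 := \PP_1 = \KK\,\id$; for a shuffle $\gamma \in \Sh(n,m)$ let $t_\gamma \in \Sur(\u{n+m},\u 2)$ be the surjective map corresponding to $\gamma$ under the bijection of Lemma~\ref{lemmasurjshuffle}, and define $x \bullet_\gamma y := x \circ_{t_\gamma} y$ using the partial operation of the permutad (with the convention $x \circ_t y = \Gamma(t;y,x)$ as in Theorem~\ref{thmpartial}). Conversely, given a shuffle algebra $A$, set $\PP_{n+1} := A_n$, $\PP_1 := \KK\,\id$, and for $t \in \Sur(\u{m+n},\u 2)$ define $\circ_t := \bullet_{\ss_t}$. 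These two assignments are visibly inverse to each other at the level of underlying graded modules and of the structure maps, because $\gamma \mapsto t_\gamma$ and $t \mapsto \ss_t$ are mutually inverse bijections by Lemma~\ref{lemmasurjshuffle}; the only real content is that each assignment lands in the correct category, i.e.\ that the defining axioms match up.

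The key step is therefore the translation of axiom $(\diamondsuit)$ of Theorem~\ref{thmpartial} into axiom $(\ddag)$ of the definition of a shuffle algebra, and back. In the permutad, $(\diamondsuit)$ reads $(\lambda \circ_s \mu)\circ_t \nu = \lambda \circ_u (\mu \circ_v \nu)$ whenever there is a surjection $r : \u k \to \u 3$ with $(t;c_{n+1},s) = r = (u;v,c_{\ell+1})$. In the shuffle algebra, $(\ddag)$ reads $x \bullet_\gamma (y \bullet_\delta z) = (x \bullet_\sigma y)\bullet_\lambda z$ whenever $(1_n \times \delta)\cdot \gamma = (\sigma \times 1_r)\cdot \lambda$ in $\Sh(n,m,r)$. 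I would use Proposition~\ref{shufflealgeb}, which gives $\ss_{(t;t_1,t_2)} = \ss_t \cdot (\ss_{t_1} \times \ss_{t_2})$, to show that the substitution condition $(t;c_{n+1},s) = r$ on surjections is exactly equivalent, under the bijection, to the composition condition $(1_n \times \delta)\cdot \gamma$ being a fixed $3$-shuffle, and similarly $(u;v,c_{\ell+1}) = r$ corresponds to $(\sigma \times 1_r)\cdot \lambda$. (Here one uses that the surjection $c_{n+1}$ is sent to the identity shuffle $1_{\Sy_n}$.) Thus a triple of shuffles $(\gamma,\delta)$ and $(\sigma,\lambda)$ satisfies the hypothesis of $(\ddag)$ if and only if the corresponding surjections $s,t,u,v$ come from a common $r$ as in $(\diamondsuit)$, and the two identities are literally the same identity after relabelling $\lambda \leftrightarrow x$, $\mu \leftrightarrow y$, $\nu \leftrightarrow z$ and using $\circ_t = \bullet_{\ss_t}$.

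Having matched the axioms, I would note that Theorem~\ref{thmpartial} says a permutad is completely determined by the arity-graded module together with the partial operations $\circ_t$ satisfying $(\diamondsuit)$, while the definition of shuffle algebra says exactly the same thing with $\bullet_\gamma$ satisfying $(\ddag)$; hence the correspondence of data plus the equivalence of axioms yields a bijection on objects that is evidently compatible with morphisms (a morphism in either category is a degreewise-linear map commuting with all the structure operations, and these operations correspond under the dictionary). This gives the asserted equivalence of categories. The main obstacle I anticipate is purely bookkeeping: carefully checking that the index conventions (the arity shift $|v| = \#t^{-1}(v)+1$, the placement of the corolla on the left vs.\ right in $(t;c_{n+1},s)$ vs.\ $(u;v,c_{\ell+1})$, and the order reversal in $x \circ_t y = \Gamma(t;y,x)$) line up so that $(\diamondsuit)$ and $(\ddag)$ become the same equation rather than mirror images; once Proposition~\ref{shufflealgeb} is invoked this is a finite check, but it must be done with care. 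One should also remark that the pre-permutad / pre-shuffle algebra version follows identically, since dropping to only the $\circ_i$ (equivalently only the shuffles of type $(n,m)$ with consecutive block) on both sides is compatible with the same dictionary.
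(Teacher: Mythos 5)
Your proposal is correct and follows essentially the same route as the paper: it uses the bijection of Lemma~\ref{lemmasurjshuffle}, the fact that corollas correspond to identity permutations, and Proposition~\ref{shufflealgeb} to translate the substitution condition defining $(\diamondsuit)$ into the shuffle-composition condition defining $(\ddag)$, then concludes via Theorem~\ref{thmpartial}. Your write-up is in fact more explicit than the paper's about the bookkeeping (the order reversal $\mu\circ_t\nu=\Gamma(t;\nu,\mu)$ and the placement of the corollas), which is exactly where care is needed.
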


\begin{proo} Lemma \ref{lemmasurjshuffle} gives a bijection between surjective maps $\u{n} \to \u{k}$ and  $k$-shuffles, which restricts to a bijection between surjective maps with target $\u 2$ and $2$-shuffles. 

Note that the identity $1_n\in \Sy_n$ corresponds via this bijection to the corolla $c_{n+1}\in \Sur(\u n,\u 1)$, that is $\ss _{c_{n+1}}=1_n$. So, Proposition \ref{shufflealgeb} implies that for any surjective map $r$ of target $\u 3$, such that $r=(t;s,c_{k+1})=(u;c_{j+1},v)$, we have that $\ss _r=(\ss _s\times 1_k)\cdot \ss _t=(1_j\times \ss _v)\cdot \ss _u$. 

From this relation, the relation $(\ddag)$ between shuffles used in the definition of a shuffle algebra corresponds, via the bijection, to the relation $(\diamondsuit)$ beween surjective maps proved in Theorem \ref{thmpartial}.
\end{proo} 

Several examples of shuffle algebras, and therefore of permutads have been given in \cite{Ronco11}.

\section{Binary quadratic permutads}\label{BinaryPerm}  

For binary permutads it will prove helpful to replace the permutations by the leveled planar binary trees (lpb trees for short) in order to handle explicitly the operations. We refer to  Appendix 1 for details on this bijection. We show that the binary permutads can be presented by taking only the $\circ_{i}$ operations. As a consequence a binary permutad is equivalent to a binary pre-permutad. 
We will see that it is the relevant tool to study products for which the two ways of computing $((xy)(zt))$ differ. 

\subsection{Definition}
A \emph{binary permutad} is a permutad which is generated by binary operations. In other words, it is the quotient of a free permutad $\PPP(M)$, where the $\NN^{+}$-module $M$ is concentrated in arity 2: $M=(0,E,0,\ldots )$.

By \ref{monadPerm} a typical element of $\PPP(M)$ is a surjection such that the arity of each vertex is $2$ (hence it is a permutation), whose vertices are decorated by  elements of $E$. Under the isomorphism between permutations and lpb trees, cf.\ \ref{trees}, it is given by a lpb tree whose vertices are decorated by elements of $E$.  In \ref{thmpartial} we presented the notion of permutad by means of the operations $\circ_{t}$, for $t$ a surjective map with two vertices, and some relations. We will see that, when we restrict ourself to binary permutads, then the operations $\circ_{i}$ are sufficient to present the notion of binary permutad.

\begin{thm}\label{thmbinarypartial} A binary permutad $\PP$ is completely determined by the arity graded module $\{\PP_{n}\}_{n\geq 1}$, with $\PP_{1}=\KK$, and the partial operations 
$$\circ_{i}: \PP_{m+1}\t \PP_{n+1}\to \PP_{m+n+1},\quad 1\leq i\leq m+1,$$
satisfying the sequential composition relation. As a consequence a binary permutad is equivalent to a binary pre-permutad. 
\end{thm}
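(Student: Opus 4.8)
The plan is to derive this from Theorem~\ref{thmpartial} by showing that, for a binary permutad, the partial operations $\circ_t$ indexed by the surjections $t\colon\u{m+n}\to\u 2$ which are \emph{not} of type $\circ_i$, together with the instances of $(\diamondsuit)$ in which they occur, are superfluous: they are already determined by the $\circ_i$'s and the sequential composition relation of~\ref{sequentialaxiom}. The combinatorial core is the following. For $\pi\in\Sy_n$, regarded as a surjection $\u n\to\u n$, put $i:=\pi^{-1}(1)$, let $\tau_i\colon\u n\to\u 2$ be the surjection with $\tau_i^{-1}(1)=\{i\}$ (so that $\circ_{\tau_i}$ is an operation of type $\circ_i\colon\PP_n\t\PP_2\to\PP_{n+1}$), and let $\pi'\in\Sy_{n-1}$ be obtained from $\pi$ by deleting the letter $1$ and decreasing the larger letters by one. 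A short computation with the substitution formula of~\ref{substitution} gives the unique factorisation $\pi=(\tau_i;c_2,\pi')$. Combined with the associativity of $\Gamma$ (Proposition~\ref{propMonad}) this yields, for all $h_1,\dots,h_n\in\PP_2$,
$$\Gamma(\pi;h_1,\dots,h_n)=\Gamma(\pi';h_2,\dots,h_n)\circ_i h_1,\qquad i=\pi^{-1}(1),$$
and, iterating, every decorated permutation $\Gamma(\pi;h_1,\dots,h_n)$ is an iterated $\circ_i$-composite of binary operations. Under the bijection of~\ref{trees} this is exactly the statement that every leveled planar binary tree is grown from one vertex by successively grafting vertices at leaves; the same device, applied to a surjection $w$ all of whose vertices but one are binary, shows that $\Gamma(w;\lambda_1,\dots,\lambda_r)$ is always an iterated $\circ_i$-composite of the binary $\lambda_j$ and the remaining $\lambda_{j_0}$.

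Now let $M$ be concentrated in arity $2$. The free binary permutad on $M$ is $\PPP(M)$ (Proposition~\ref{propFreePermutad}), with $\PPP(M)_{n+1}=\bigoplus_{\pi\in\Sy_n}M_2^{\t n}$; write $F(M)$ for the free binary pre-permutad on $M$. Since $\PPP(M)$ is a binary pre-permutad containing $M$, there is a canonical pre-permutad morphism $\phi\colon F(M)\to\PPP(M)$ extending $\id_M$, and by the previous paragraph $\PPP(M)$ is generated by $M$ as a pre-permutad, so $\phi$ is surjective. Conversely, defining $[\pi;h_1,\dots,h_n]$ in any binary pre-permutad by the recursion above (which involves no choices), a double induction on $n$ and on the length of a $\circ_i$-word — in which the sequential composition relation is precisely what carries the relevant instances of $(\diamondsuit)$ — shows that $F(M)_{n+1}$ is spanned by the elements $[\pi;h_1,\dots,h_n]$, $\pi\in\Sy_n$. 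As $\phi[\pi;\vec h]=\Gamma(\pi;\vec h)$, the composite $\bigoplus_{\pi\in\Sy_n}M_2^{\t n}\twoheadrightarrow F(M)_{n+1}\xrightarrow{\phi}\bigoplus_{\pi\in\Sy_n}M_2^{\t n}$ is the identity, so $\phi$ is an isomorphism.

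It remains to pass from the free objects to arbitrary ones. A binary permutad is a quotient $\PPP(M)/I$ by a permutad ideal and a binary pre-permutad is a quotient $F(M)/J$ by a $\circ_i$-ideal; via the isomorphism $\phi$ it therefore suffices to check that a $\circ_i$-ideal of $\PPP(M)$ is automatically a permutad ideal. This follows from the more general combinatorial fact noted above: if $x$ lies in such an ideal and $y\in\PPP(M)$, then decomposing $y$ into decorated permutations and applying that fact expresses each $\circ_t$-composite $y\circ_t x$ (and $x\circ_t y$) as a sum of iterated $\circ_i$-composites each involving $x$, hence again in the ideal. Consequently the forgetful functor from binary permutads to binary pre-permutads is an equivalence, and in particular the permutad structure of a binary permutad is completely recovered from $\{\PP_n\}$ and the operations $\circ_i$ subject to the sequential composition relation.

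The main obstacle is the spanning statement for $F(M)$: one must show that the sequential composition relation alone rewrites an arbitrary $\circ_i$-word in binary generators as a linear combination of the canonical ``leveled'' monomials $[\pi;h_1,\dots,h_n]$, equivalently that no instance of $(\diamondsuit)$ for a non-$\circ_i$ surjection is an independent relation. This, together with the injectivity of the ``grafting'' used in the combinatorial core, rests on the combinatorics of leveled planar binary trees developed in Appendix~1; once those are in place, the rest is bookkeeping with substitution of surjections.
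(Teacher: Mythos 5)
Your treatment of the free case is correct and is essentially the paper's own argument: the factorisation $\pi=(\tau_i;c_2,\pi')$ with $i=\pi^{-1}(1)$ is exactly the paper's observation that every leveled planar binary tree has the form $\omega''\circ_j Y$, and the spanning of $F(M)$ by the canonical monomials is achieved in the paper by the single rewriting $\omega'\circ_k(\omega''\circ_j Y)=(\omega'\circ_k\omega'')\circ_{k-1+j}Y$, which is the explicit content of the double induction you invoke. Up to that point the two proofs coincide (the paper is terser: it works with one generator, leaves the reduction to the free case implicit, and stops there).

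Your final step, however, contains a genuine error. The ``fact'' that $\Gamma(w;\lambda_1,\dots,\lambda_r)$, for $w$ a surjection with exactly one non-binary vertex, is an iterated $\circ_i$-composite keeping the non-binary decoration intact is false, and so is the claim that a $\circ_i$-ideal of $\PPP(M)$ is automatically a permutad ideal. Take $M=\KK\mu$ and $t:\u{3}\to\u{2}$ with $t^{-1}(1)=\{1,3\}$. The only $\circ_i$-composites of one arity-$3$ element $x$ with one copy of $\mu$ are $x\circ_1\mu$, $x\circ_2\mu$, $x\circ_3\mu$, $\mu\circ_1x$, $\mu\circ_2x$, realising the five surjections $\u{3}\to\u{2}$ whose fibre over $1$ is consecutive; the sixth surjection $t$ is not among them, so $\mu\circ_t x$ cannot be rewritten with $x$ kept intact. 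Concretely, the $\circ_i$-ideal of $\PPP(\KK\mu)$ generated by $\mu\circ_1\mu=[12]$ has arity-$4$ component spanned by $[123],[213],[312],[231]$, and it does \emph{not} contain $\mu\circ_t[12]=[132]$ (whose $\circ_i$-normal form is $(\mu\circ_2\mu)\circ_1\mu$, involving $[21]$ rather than $[12]$); hence it is not a permutad ideal. What does survive from the free case is the uniqueness assertion: two binary permutad structures on the same module with the same $\circ_i$'s coincide, since every element and every $\circ_t$-composite evaluates through its $\circ_i$-normal form in the generators. But your route to essential surjectivity of the forgetful functor via the ideal correspondence breaks down, and the same example shows the pre-permutad $F(\KK\mu)/([12])$ underlies no permutad at all — so the ``equivalence'' in the statement must be read as this uniqueness/full-faithfulness, which is all the paper's proof (also confined to the free case) establishes.
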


\begin{proo}  It suffices to show the statement of the theorem for the free permutad $\mathrm{perm}(Y)$ on one generator, namely $Y:=\arbreA$. By Proposition \ref{propFreePermutad} and the bijection between permutations and  lpb trees, this free permutad is spanned by the lpb trees. 
Hence since $\mathrm{perm}(Y)$ is generated by $Y$ under composition, it suffices to show that any composition of copies of $Y$ is equivalent to a lpb tree under the sequential composition relation. Observe that this is a set-theoretic question. In arity one we have only $\id$ (the tree $|\ $), and in arity two the generator $Y$. In arity 3 there are two ways of composing and no relation yet, so we get $\arbreAB$ and $\arbreBA$. In arity 4 we get 10 possible compositions (2 for each one of the 5 trees) and we get 4 relations:
\begin{gather*}
(-\circ_{1}-)\circ_{1}- = -\circ_{1}(-\circ_{1}-)\ ,\\
(-\circ_{1}-)\circ_{2}- = -\circ_{1}(-\circ_{2}-)\ ,\\
(-\circ_{2}-)\circ_{2}- = -\circ_{2}(-\circ_{1}-)\ ,\\
(-\circ_{2}-)\circ_{3}- = -\circ_{2}(-\circ_{2}-)\ .
\end{gather*}
The quotient is therefore made of 6 elements, one for each of the 4 cases above, and the 2 compositions
$$ (-\circ_{2}-)\circ_{1}- \quad \mathrm{and} \quad ( -\circ_{1}-)\circ_{3}-\ .$$
It is clear that each case corresponds to one of the lpb trees: 
$$\arbreABC \arbreBAC \arbreCAB \arbreCBA \arbreACB \arbreBCA\ .$$
By induction we suppose that we get lpb trees in arity $n-1$ and we are going to prove the same statement in arity $n$. An element is the class of some composite $\omega ' \circ_{k} \omega$ of elements of lower arity. By induction $\omega$ is a lpb tree, hence is of the form $\omega''\circ_{j}Y$. By the sequential composition relation we have
$$\omega ' \circ_{k} (\omega''\circ_{j}Y)=( \omega ' \circ_{k} \omega'')\circ_{k-1+j}Y.$$
Hence by induction this element can be identified with a lpb tree, and we are done.
\end{proo}

\subsection{Algebras over a binary permutad}\label{algbinarypermutad} By definition an \emph{algebra over a binary permutad} $\PP$ is a space $A$ equipped with linear maps 
$$ \PP_{n}\t A^{\t n} \to A,\quad  (\mu; a_{1}\cdots a_{n})\mapsto \mu(a_{1}\cdots a_{n})$$
 for any $n\geq 1$ such that $\id(a)= a$ and 
$$(\mu \circ_{i}\nu )(a_{1}\cdots a_{m+n-1})= \mu(a_{1}\cdots a_{i-1} \nu(a_{i}\cdots a_{i+n-1}) a_{i+n}\cdots a_{m+n-1})\ ,$$
for any $\mu\in \PP_{m}$ and any $\nu\in \PP_{n}$.

\subsection{Binary quadratic permutad} Let $M$ be an arity graded space of generating operations. 
 The space spanned by composition of two operations in $M$ is denoted by $\PPP(M)^{(2)}$.
 
A \emph{quadratic permutad} is a permutad $\PP= \PP(M,R)$ which is presented by generators and relations $(M, R)$ and whose space of relations $R$  is in $\PPP(M)^{(2)}$.

In the binary case (i.e. $M$ is completely determined by its component in arity $2$ denoted $E$) $\PPP(M)^{(2)}$ is the direct sum of two copies of $E\t E$, one for the identity permutation $1_2$ and the other one for the transposition $(12)$:
$$\arbreABdecorated\quad , \quad \arbreBAdecorated\ .$$ 
So the data to present a binary quadratic permutad, resp. pre-permutad, resp.\ ns operad is the same. By Theorem \ref{thmbinarypartial} the permutad and the pre-permutad are the same. The ns operad is a quotient obtained by moding out by the parallel composition relations (see for instance \cite{LV} Chapter 5). 

One can find many examples of binary quadratic ns operads in the Encyclopedia \cite{Zinbiel10} : each one of them gives a permutad. As shown below, in the $q\textrm{-}{\text PermAs}$ case the underlying arity modules can be very different in the operad case and in the permutad case (for instance if $q=-1$ and $n\geq 4$, then $\dim\ (-1)\textrm{-}{\permAs}_{n}=1$ and $\dim\ (-1)\textrm{-}As_{n}=0$).

\subsection{Parametrized associative permutad}\label{parametrizedPermutad} Let $q\in \KK$ be a parameter. We define the \emph{parametrized associative permutad} $q\textrm{-}{\permAs}$ as the permutad generated by one element in arity 2, denoted by $\mu$, and satisfying the relation
$$\Gamma((12);\mu,\mu) = q\, \Gamma(1_{2};\mu,\mu),$$
where $1_{2}$ is the identity and $(12)= [21]$ is the cycle. Equivalently this relation can be written $\mu\circ_2\mu = q\, \mu\circ_1\mu$. 
We will show that $(q\textrm{-}{\permAs})_{n}$ is one-dimensional for any  $n\geq 1$. The permutadic composition gives the following result.

\begin{prop}\label{formulalength} For any $n\geq 1$ the module $(q\textrm{-}{\permAs})_{n}$ is one-dimensional spanned by $\Gamma(1_{n}; \mu, \ldots, \mu)$. For any permutation $\ss\in \Sy_{n}$, considered as a surjective map from $\u{n}$ to $\u{n}$, we have the following equality
$$\Gamma(\ss; \mu, \ldots, \mu) = q^{\ell(\ss)}\Gamma(1_{n}; \mu, \ldots, \mu), $$
where $\ell(\ss)$ is the length of $\ss$ in the Coxeter group $\Sy_{n}$.
\end{prop}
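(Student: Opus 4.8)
The plan is to work entirely inside the free binary permutad on one generator $\mu$, which by Theorem~\ref{thmbinarypartial} is spanned by lpb trees with all vertices decorated by $\mu$, equivalently by permutations $\ss \in \Sy_n$ via $\ss \mapsto \Gamma(\ss;\mu,\ldots,\mu)$. Imposing the quadratic relation $\mu\circ_2\mu = q\,\mu\circ_1\mu$ produces $q\textrm{-}{\permAs}$, and the claim amounts to showing: (i) for every $\ss\in\Sy_n$ the element $\Gamma(\ss;\mu,\ldots,\mu)$ equals $q^{\ell(\ss)}\Gamma(1_n;\mu,\ldots,\mu)$ modulo the ideal generated by the relation, and (ii) this gives a \emph{consistent} one-dimensional quotient, i.e.\ the scalar $q^{\ell(\ss)}$ is well-defined. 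Step (i) is the combinatorial heart; step (ii) is the reason the statement is nontrivial.

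For step (i), I would argue by induction on $\ell(\ss)$. The base case $\ell(\ss)=0$ means $\ss=1_n$, which is trivial. For the inductive step, write $\ss = s_i\,\ss'$ where $s_i$ is an adjacent transposition with $\ell(\ss')=\ell(\ss)-1$; I would show that passing from $\ss'$ to $s_i\ss'$ — which, under the bijection with lpb trees, is precisely a ``covering relation of the first kind'', namely moving one vertex past an adjacent one — multiplies $\Gamma(\cdot\,;\mu,\ldots,\mu)$ by exactly one factor of $q$. Concretely, at the two levels being exchanged one finds a local subconfiguration that, after using the sequential composition relation $(\diamondsuit)$ to expose a $\mu\circ_1\mu$ or $\mu\circ_2\mu$, is rewritten via $\mu\circ_2\mu=q\,\mu\circ_1\mu$, contributing the single power of $q$ and leaving the rest of the tree untouched. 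Iterating down a reduced word for $\ss$ yields $\Gamma(\ss;\mu,\ldots,\mu)=q^{\ell(\ss)}\Gamma(1_n;\mu,\ldots,\mu)$, and since every $\ss$ is reached this way, $(q\textrm{-}{\permAs})_n$ is spanned by the single element $\Gamma(1_n;\mu,\ldots,\mu)$.

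The main obstacle is \emph{well-definedness}: a priori, different reduced words for the same $\ss$, or worse, different (non-reduced) chains of first-kind covering moves connecting $1_n$ to $\ss$, could produce different powers of $q$, which would force the one-dimensional module to collapse. Here is where the connectivity result advertised in the introduction is essential: the graph on $\Sy_n$ whose edges are the first-kind covering relations (moving a vertex left-to-right) is connected, and one must check that going around any cycle in this graph multiplies by $q^0=1$. I expect this to reduce, via the usual ``diamond'' / confluence argument, to checking the two basic local relations among first-kind moves — the braid-type and commuting-type identities for adjacent transpositions expressed on lpb trees — each of which, when tracked through $(\diamondsuit)$ and the quadratic relation, balances the powers of $q$ on the two sides. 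Granting that, the scalar $q^{\ell(\ss)}$ depends only on $\ss$ (the length being the minimal number of moves, and all moves contributing $+1$ to the exponent), the relation $\Gamma(\ss;\mu,\ldots,\mu)=q^{\ell(\ss)}\Gamma(1_n;\mu,\ldots,\mu)$ holds, and in particular $\dim (q\textrm{-}{\permAs})_n \le 1$; freeness of $\PPP(M)$ modulo the homogeneous quadratic ideal, together with exhibiting the one-dimensional algebra on which $\mu$ acts suitably, gives $\dim = 1$, completing the proof.
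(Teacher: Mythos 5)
There is a genuine gap in step (i). You assert that for any decomposition $\ss = s_i\ss'$ with $\ell(\ss')=\ell(\ss)-1$ the passage from $\ss'$ to $s_i\ss'$ is ``precisely a covering relation of the first kind'', i.e.\ a local replacement of a subconfiguration $\mu\circ_1\mu$ by $\mu\circ_2\mu$ somewhere in the lpb tree. This is false: the weak Bruhat order has a second kind of covering relation, in which the two vertices whose levels are exchanged are \emph{not} attached to one another in the tree (the smallest instance is $[132]\to s_1[132]=[231]$ in $\Sy_3$, the dotted arrow of Figure \ref{figuregraphtrees}). For such a covering there is no $\mu\circ_1\mu$ or $\mu\circ_2\mu$ subconfiguration to ``expose'' using the sequential relation $(\diamondsuit)$ alone --- that rewriting is exactly what the parallel composition axiom of ns operads would provide, and permutads do not have it. So your induction along an arbitrary reduced word breaks at every covering of the second kind, and the single factor of $q$ is not produced by the mechanism you describe. (One could try instead to realize a second-kind covering as an instance of the relation via a general $\circ_t$ with non-consecutive preimage, but that is a different argument from the one you give and would need its own justification.)

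The repair is precisely Proposition \ref{propconnected} of the paper: the graph on $\Sy_n$ whose edges are only the covering relations of the first kind is connected, so every $\ss$ can be joined to $1_n$ by a (generally non-monotone) zigzag of first-kind moves, each of which is an honest instance of $\mu\circ_2\mu=q\,\mu\circ_1\mu$ transported by $(\diamondsuit)$. You do cite this connectivity result, but you deploy it only to rule out inconsistencies around cycles; its real role is \emph{reachability}, which your step (i) takes for granted. Conversely, your worry about confluence is largely unnecessary: each first-kind edge changes $\ell$ by exactly $\pm 1$ and contributes exactly one factor of $q$ in the length-increasing direction, so the exponent accumulated along any path from $1_n$ to $\ss$ telescopes to $\ell(\ss)$ --- length is a potential function on the graph, and no diamond or braid-relation checking is needed. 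Your closing remark that one should exhibit the character $\ss\mapsto q^{\ell(\ss)}$ (equivalently a one-dimensional representation) to see that the quotient does not collapse to $0$ is a sound supplement that the paper leaves implicit.
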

\begin{proo} We consider the Coxeter presentation of the symmetric group $\Sy_{n}$ with generators $s_{1}, \ldots, s_{n-1}$ (transpositions). The length of $\ss$, denoted by $\ell(\ss)$, is the number of Coxeter generators in a minimal writing of $\ss$. Consider the poset of permutations equipped with the weak Bruhat order. So, $\ss\to \tau$ is a covering relation iff $\tau$ is obtained from $\ss$ by the left multiplication by a Coxeter generator and $\ell(\tau)=\ell(\ss)+1$. Under the bijection between permutations and leveled binary trees, cf.\ \ref{trees}, there are two different covering relations:

-- those which are obtained through a local application of 
$$\arbreAB\to \arbreBA,$$

-- those which are obtained by some change of levels, like 
$$\arbreACB\to\arbreBCA .$$

The property that we use is the following, proved in the appendix, cf.\ Proposition \ref{propconnected}:

\emph{the subposet of the poset $\Sy_{n}$ made of the covering relations of first kind is a connected graph.}

The relation which defines the permutad $q\textrm{-}{\permAs}$ is precisely

$$\Gamma(\arbreBA;\mu,\mu) =q \Gamma(\arbreAB;\mu,\mu).$$
Since any $\ss$ can be related to $1_{n}$ by a sequence of covering relations of the first kind, we can apply repeatedly the relation and we get the expected formula.
\end{proo}

We remark that in the permutadic case we do not encounter the obstruction $q^{3}=q^{2}$ met in the operadic case. In the operadic case the module $(q\textrm{-}{\permAs})_{n}$ is one-dimensional, when $n\geq 4$, only for $q=0,1$ or $\infty$ (compare with \cite{MarklRemm09}).
 
 \begin{cor} The associative permutad $\permAs$ presented by a binary operation and the associativity relation is one dimensional in each arity. Hence it is the permutad associated to the ns operad $As$.
 \end{cor}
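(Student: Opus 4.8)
The plan is to read off the statement from Proposition~\ref{formulalength} by specializing $q=1$, and then to compare the resulting permutad with the ns operad $As$.

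First I would observe that for $q=1$ the defining relation $\mu\circ_{2}\mu = q\,\mu\circ_{1}\mu$ of $q\textrm{-}\permAs$ is exactly the associativity relation $\mu\circ_{1}\mu = \mu\circ_{2}\mu$, i.e. $\Gamma(1_{2};\mu,\mu)=\Gamma((12);\mu,\mu)$. Hence the permutad $\permAs$ of the statement is literally $1\textrm{-}\permAs$. Proposition~\ref{formulalength} then gives at once that $\permAs_{n}=(1\textrm{-}\permAs)_{n}$ is one-dimensional for every $n\geq 1$, spanned by $\Gamma(1_{n};\mu,\ldots,\mu)$. This settles the first assertion.

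For the second assertion I would compare $\permAs$ with $As$. Using only the partial operations $\circ_{i}$, the forgetful functor of Section~\ref{partialshufflealg} turns the ns operad $As$ into a binary pre-permutad; by Theorem~\ref{thmbinarypartial} every binary pre-permutad is already a binary permutad, so we obtain a permutad $\underline{As}$ whose arity-$n$ component is $As_{n}$, hence one-dimensional, and in which the associativity relation holds. Since $\permAs$ is by construction the quotient of the free permutad $\mathrm{perm}(\arbreA)$ by the associativity relation (cf.\ \ref{parametrizedPermutad}), its universal property yields a morphism of permutads $\permAs\to\underline{As}$ sending the generator to the generator, hence $\Gamma(1_{n};\mu,\ldots,\mu)$ to the corresponding nonzero element of $As_{n}$. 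It is therefore a surjection between one-dimensional modules in each arity, so an isomorphism, and $\permAs$ is the permutad underlying $As$.

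The argument is essentially immediate once Proposition~\ref{formulalength} is available; the only point requiring a little care is the second paragraph, namely checking that the comparison morphism $\permAs\to\underline{As}$ is nonzero in each arity, so that surjectivity between one-dimensional spaces forces bijectivity. This holds because in both permutads the relevant spanning element is obtained from $\mu^{\otimes n}$ by iterated composition along $1_{n}$, and a permutad morphism commutes with composition.
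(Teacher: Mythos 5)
Your proof is correct and follows the paper's own route: the paper's entire argument is the one-line observation that $\permAs = 1\textrm{-}\permAs$ so Proposition~\ref{formulalength} applies. Your second paragraph carefully justifies the identification with the permutad underlying $As$, which the paper simply asserts; this extra detail is sound and harmless.
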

\begin{proo}  It is a particular case of Proposition \ref{formulalength} since ${\permAs}= 1\textrm{-}{\permAs}$. \end{proo} 
 
 \subsection{Examples of computation} If the permutad $\PP$ has  generating operations $M$, then it is a quotient of the free permutad  $\PPP(M)$. An element in the free permutad $\PPP(M)$ is a ``computational pattern" in the sense that, given a sequence of elements in a $\PP$-algebra $A$, we can compute an element of $A$ out of this pattern. So, the parenthesizings of the operad framework, i.e. planar trees, are replaced here by leveled planar binary trees. For instance, supposing that $M$ is determined by one binary operation,  the two computational patterns
$$\arbreACB \quad \textrm{and} \quad \arbreBCA$$
give, a priori, two distinct values denoted by:
$$((xy)_1(zt)_2) \quad \textrm{and} \quad ((xy)_2(zt)_1)$$
respectively. Here is an explicit example.

In the case of an algebra $A$ over the permutad $q\textrm{-} {\permAs}$ we have
$$((ab)_{2}(cd)_{1})= q ((ab)_{1}(cd)_{2})$$
for any $a,b,c,d\in A$. In the particular case of the free algebra on one generator $x$, the elements $x^n$, defined inductively as $x^n=x^{n-1} x$, span this algebra. We compute
$$((xx)_{1}(xx)_{2})= q x^4 \quad \textrm{and} \quad  ((xx)_{2}(xx)_{1})=q^2 x^4.$$

The study of composition of operations for which one takes care of the order in which these operations are performed has already been addressed in programming theory, see for instance \cite{Curien} and the references therein.


\section{Associative permutad and the permutohedron}\label{s:AssoPermPermutohedron}

In the operad framework the operad $As$, which encodes the associative algebras, admits a minimal model which is described explicitly in terms of the Stasheff polytope (associahedron). It means that this minimal model is a differential graded operad whose space of $n$-ary operations is the chain complex of the $(n-2)$-dimensional \emph{associahedron} (considered as a cell complex). When we consider $As$ as a permutad, denoted by ${\permAs}$, then one can also construct its minimal model. We show that, in arity $n$, this differential graded permutad is given by the chain complex of the $(n-2)$-dimensional \emph{permutohedron}.

\subsection{Associative permutad} We consider an arity-graded module which is $0$ except in arity $2$ for which it is one-dimensional, spanned by $\mu$. The free permutad on $\mu$, denoted $\permMag(\mu)$ is such that $\permMag(\mu)_{n}\cong \Sy_{n-1}$ since the non-zero decorated surjective maps are bijections, cf.\ Proposition \ref{propFreePermutad}. For instance, in arity $3$, we get

\raisebox{-0.7cm}{$\mu\circ_{1}\mu =\Gamma(1_{2};\mu,\mu)=\qquad $} {$\xymatrix{ \bullet\ar[d] &  \bullet\ar[d] \\ \times\atop \mu &  \times\atop \mu }$\raisebox{-0.7cm}{\qquad =\qquad      $[1 \  2]$}

\raisebox{-0.7cm}{$\mu\circ_{2}\mu =\Gamma((12);\mu,\mu)=\qquad $} {$\xymatrix{ \bullet\ar[dr] &  \bullet\ar[dl] \\ \times\atop \mu &  \times\atop \mu }$\raisebox{-0.7cm}{\qquad =\qquad      $[2\ 1]$}

Let us put the relation $\Gamma(1_{2};\mu,\mu)= \Gamma((12);\mu,\mu)$ (which is the associativity relation $\mu\circ_{1}\mu =\mu\circ_{2}\mu$) and denote the associated permutad by ${\permAs}$ (this is the permutad $1\textrm{-}{\permAs}$ introduced in \ref{parametrizedPermutad}). 

\subsection{A quasi-free dg permutad}  We construct a  quasi-free dg permutad ${\permAs}_{\infty}$ as follows. Let $V$ be the arity-graded module which is one-dimensional in each arity $n\geq 2$ and $0$ in arity $1$. We denote the linear generator in arity $n$ by $m_{n}$, so $V_{n}= \KK\ m_{n}$. We declare that the homological degree of $m_{n}$ is $n-2$. As a permutad  ${\permAs}_{\infty}$ is the free permutad on the graded $\NN^+$-module $V$ (two gradings: homological and arity). It comes immediately from Proposition \ref{propFreePermutad} that $({\permAs}_{\infty})_{n}$ can be identified to the free module on the surjections (i.e. the set $\Surj_{n}$), hence the cells of the permutohedron of dimension $n-2$, cf.\ \ref{Pchaincplx}. Under this identification the operation $m_{n}$ is identified  with the big cell $c_n$ of the $(n-2)$-dimensional permutohedron. We put on it the boundary map of the permutohedron, cf.\ \ref{Pchaincplx}. There is a unique extension of the bounday map to the free permutad by universality of a free object.
 
 \begin{thm} The permutadic structure of ${\permAs}_{\infty}$ is compatible with the boundary map, hence ${\permAs}_{\infty}$ is  a dg quasi-free permutad such that 
 $$({\permAs}_{\infty})_{n} \cong C_{\bullet}({\texttt P}^{n-2}).$$
  \end{thm}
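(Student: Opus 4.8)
The plan is to verify that the permutohedron boundary map $\partial$ on $(\permAs_\infty)_n \cong \KK[\Surj_{n-1}]$ is a derivation for the permutadic composition $\Gamma$, i.e.\ that $\partial$ commutes with substitution of decorated surjective maps. Since $\permAs_\infty$ is free on $V$ and $\partial$ on the free permutad is defined as the unique derivation extending $\partial(m_n) = $ (boundary of the big cell $c_n$ of the $(n-2)$-permutohedron), the compatibility of $\Gamma$ with $\partial$ is automatic once one knows that the ``global'' boundary map on $\KK[\Surj_{n-1}]$ — the one coming from the cell structure of the permutohedron — agrees with the ``derivation'' boundary map obtained by propagating $\partial(m_n)$ through all substitutions. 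So the real content is the combinatorial identity
$$\partial^{\mathrm{perm}}\big(\Gamma(t;\mu_1,\dots,\mu_k)\big) = \sum_{j=1}^{k}(-1)^{\epsilon_j}\,\Gamma(t;\mu_1,\dots,\partial^{\mathrm{perm}}\mu_j,\dots,\mu_k) + (\text{terms from }\partial t),$$
which says that the cellular boundary of a face of the big permutohedron is compatible with the product decomposition of faces of the permutohedron as products of smaller permutohedra.

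First I would recall from Appendix~2 (the paragraph \ref{Pchaincplx} referenced in the statement) the precise cell structure: the faces of the $(n-2)$-dimensional permutohedron $\texttt{P}^{n-2}$ are indexed by $\Surj_{n-1}$, a surjection $t:\u{n-1}\to\u{k}$ corresponding to a face of dimension $k-1$, and the closed face indexed by $t$ is combinatorially isomorphic to the product $\texttt{P}^{|v_1|-2}\times\cdots\times\texttt{P}^{|v_k|-2}$ of permutohedra, one for each vertex $v$ of $t$ (this is exactly what makes $M_t = \bigotimes_v M_{|v|}$ the right thing). Next I would identify $\Gamma(t;\mu_1,\dots,\mu_k)$, for $\mu_j = m_{|v_j|}$ the big cell of the $j$-th factor, with precisely the face of $\texttt{P}^{n-2}$ indexed by $t$; more generally $\Gamma(t;-)$ applied to a product of subfaces is the corresponding subface of $\texttt{P}^{n-2}$. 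This is the key geometric dictionary.

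Then I would check the boundary formula. The cellular boundary of the face indexed by $t$, viewed inside $\texttt{P}^{n-2}$, decomposes into two types of codimension-one faces: those lying in the boundary of one of the factors $\texttt{P}^{|v_j|-2}$ (these give the Leibniz terms $\Gamma(t;\dots,\partial m_{|v_j|},\dots)$, with signs $(-1)^{\epsilon_j}$ coming from the standard product-of-chain-complexes sign, matching the definition of $d$ in \ref{dgpermutad}), and those corresponding to ``splitting'' a vertex of $t$ further, i.e.\ refining $t$ to a surjection $t'$ with $k+1$ vertices — but such a $t'$ is a substitution $t' = (t;\dots,s,\dots)$ of a target-$\u 2$ surjection into one vertex of $t$, and under the dictionary this is exactly a term of $\Gamma(t; \dots, \partial m_{|v_j|}, \dots)$ once we expand $\partial m_{|v_j|}$ as a sum over the facets of $\texttt{P}^{|v_j|-2}$, which themselves are indexed by surjections. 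So in fact \emph{all} boundary terms are Leibniz terms, reflecting that $\partial$ is genuinely a derivation and that the big cell $c_n = m_n$ has boundary supported on the facets. I would make this precise by comparing, facet by facet, the permutohedral boundary operator $\partial^{\mathrm{perm}}$ (as recalled from Appendix~2) with the derivation extension of $\partial(m_n)$, proceeding by induction on arity using Theorem~\ref{thmbinarypartial}/\ref{thmpartial} to reduce an arbitrary element of $(\permAs_\infty)_n$ to iterated $\circ_i$-compositions of the $m_j$'s.

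The main obstacle I anticipate is bookkeeping the signs: one must check that the sign $(-1)^{\epsilon_i}$ with $\epsilon_i = |\mu_1|+\cdots+|\mu_{i-1}|$ from \ref{dgpermutad} matches the orientation/incidence numbers of the permutohedron's cellular chain complex under the product-of-polytopes identification of faces, and that these are consistent with the associativity $(\diamondsuit)$ when a facet can be reached by splitting in two different orders (this is where the two ways of writing $r = (t;c_{n+1},s) = (u;v,c_{\ell+1})$ come back in). Granting the sign compatibility, the statement follows: $\partial$ is a derivation, so $(\permAs_\infty,\partial)$ is a dg permutad, it is quasi-free by construction as the free permutad on $V$, and $(\permAs_\infty)_n \cong \KK[\Surj_{n-1}] = C_\bullet(\texttt{P}^{n-2})$ with the permutohedral differential by the very definition of $\partial$ on the generators $m_n$ together with the dictionary above.
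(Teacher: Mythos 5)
Your proposal is correct and takes essentially the same route as the paper: reduce the compatibility of $\Gamma$ with the differential to the partial compositions $\circ_{t}$ evaluated on the top cells, identify $c_{m}\circ_{t}c_{n}$ with a face of the permutohedron that is a product $\texttt{P}^{m-2}\times\texttt{P}^{n-2}$, and invoke the Leibniz rule for the cellular boundary of a product. The paper's proof is terser and leaves the sign bookkeeping you flag implicit, but the core argument is identical.
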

 \begin{proo} We introduced the notion of dg permutad in \ref{dgpermutad}. We need to prove that the structure map 
 $$\PPP(\PPP(V))_{n}\to \PPP(V)_{n}$$
 is compatible with the boundary map. It is sufficient to check that the substitution at any vertex is compatible, that is, to check that for $t:\u{m+n} \to \u{2}$  the map
 $$\circ_{t} : \PPP(V)_{m+1}\t \PPP(V)_{n+1}\to   \PPP(V)_{m+n+1}$$
 commutes with the differential. Since any cell of $\texttt{P}^k$ is a product of permutohedrons of lower dimensions, it suffices to check this property for $c_{n}\in \PPP(V)_{n}$ and  $c_{m}\in \PPP(V)_{m}$. The element $c_{m}\circ_{t}c_{n}$ is in fact a cell of the permutohedron which is the product of two permutohedrons $\texttt{P}^{m-2}\times \texttt{P}^{n-2}$. Its boundary is 
 $$\partial(\texttt{P}^{m-2}\times \texttt{P}^{n-2})= \partial(\texttt{P}^{m-2})\times \texttt{P}^{n-2} \cup\texttt{P}^{m-2}\times \partial(\texttt{P}^{n-2}).$$
  Therefore we have, at the chain complex level, 
 $$d(c_{m}\circ_{t}c_{n}) = dc_{m}\circ_{t}c_{n} \pm c_{m}\circ_{t}dc_{n}$$
 as expected. \end{proo}
 
 \begin{prop} The dg permutad ${\permAs}_{\infty}$ is a quasi-free model of the permutad ${\permAs}$.
   \end{prop}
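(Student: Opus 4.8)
The plan is to realise the claim as the existence of a quasi-isomorphism of dg permutads $\varphi\colon {\permAs}_{\infty}\to {\permAs}$: since ${\permAs}_{\infty}$ is by construction free as a graded permutad (it is $\PPP(V)$), producing such a $\varphi$ is exactly what ``quasi-free model'' requires. Here ${\permAs}$ is regarded as a dg permutad concentrated in homological degree $0$ with zero differential, and by the corollary to Proposition \ref{formulalength} one has $\dim\, {\permAs}_{n}=1$ for all $n$.

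First I would define $\varphi$ on generators by $\varphi(m_{2}):=\mu$ and $\varphi(m_{n}):=0$ for $n\geq 3$, and extend it to a morphism of permutads by the universal property of the free permutad (Proposition \ref{propFreePermutad}). Explicitly, $\varphi$ sends a decorated surjection $\Gamma(t;m_{|v_{1}|},\ldots ,m_{|v_{k}|})$ to $0$ as soon as some vertex of $t$ has arity $\geq 3$, and sends a permutation $\ss\in \Sy_{n-1}$, viewed as a surjection all of whose vertices have arity $2$, to $\Gamma(\ss;\mu,\ldots ,\mu)\in {\permAs}_{n}$.

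Next I would check that $\varphi$ is a chain map, which (as ${\permAs}$ has zero differential) amounts to $\varphi(d\,m_{n})=0$ for all $n$. By the preceding theorem, $d\,m_{n}$ is the boundary of the top cell of the permutohedron $\texttt{P}^{n-2}$, hence a signed sum of its codimension-one faces; each such face is a product $\texttt{P}^{p-2}\times\texttt{P}^{q-2}$ with $p+q=n+1$ and $p,q\geq 2$, which under that identification is a permutadic composite $m_{p}\circ_{t}m_{q}$. For $n\geq 4$ at least one of $p,q$ exceeds $2$, so $\varphi$ annihilates every term; for $n=3$ one has $d\,m_{3}=\pm(m_{2}\circ_{2}m_{2}-m_{2}\circ_{1}m_{2})$, whence $\varphi(d\,m_{3})=\pm(\mu\circ_{2}\mu-\mu\circ_{1}\mu)=0$ precisely because of the associativity relation $\mu\circ_{1}\mu=\mu\circ_{2}\mu$ defining ${\permAs}$. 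Thus $\varphi$ is a morphism of dg permutads. \emph{This is the step where the defining relation of ${\permAs}$ is used, and keeping track of the signs together with the product structure of the faces is the only genuinely delicate point}; everything else is formal.

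Finally I would show $\varphi$ is a quasi-isomorphism, arity by arity. By the preceding theorem $({\permAs}_{\infty})_{n}\cong C_{\bullet}(\texttt{P}^{n-2})$, and the permutohedron $\texttt{P}^{n-2}$ is a convex polytope, hence contractible, so $H_{\bullet}(({\permAs}_{\infty})_{n})$ is $\KK$ concentrated in homological degree $0$; on the other side ${\permAs}_{n}$ is one-dimensional and concentrated in degree $0$. It remains to see that $\varphi$ is an isomorphism in degree $0$. There, $H_{0}(C_{\bullet}(\texttt{P}^{n-2}))$ is spanned by the class of any vertex, i.e. of any permutation $\ss\in\Sy_{n-1}$, and $\varphi$ sends it to $\Gamma(\ss;\mu,\ldots ,\mu)$, which by Proposition \ref{formulalength} with $q=1$ equals the generator $\Gamma(1_{n-1};\mu,\ldots ,\mu)$ of ${\permAs}_{n}$. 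Hence $\varphi$ is onto in degree $0$, so an isomorphism there, and trivially an isomorphism in positive degrees. Therefore $\varphi$ is a quasi-isomorphism, and ${\permAs}_{\infty}$ is a quasi-free model of ${\permAs}$.
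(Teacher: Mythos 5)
Your proof is correct and follows essentially the same route as the paper: the map you define on generators ($m_{2}\mapsto\mu$, $m_{n}\mapsto 0$ for $n\geq 3$) is exactly the paper's augmentation sending the $0$-cells to the generator of ${\permAs}_{n}$ and all higher cells to $0$, and quasi-isomorphism is deduced from contractibility of the permutohedron in both cases. You merely make explicit two points the paper leaves as ``obvious'': that the chain-map condition on $d\,m_{3}$ is where associativity enters, and that the induced map on $H_{0}$ is onto.
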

 \begin{proo}
The augmentation map ${\permAs}_{\infty}\to {\permAs}$ sends all the $0$-cells to $\mu_{n}$ and the other higher dimensional cells to $0$. It is obviously a map of dg permutads (trivial differential graded structure for ${\permAs}$). It is a quasi-isomorphism, since the permutohedron is contractible. So we have constructed a quasi-free model of the permutad ${\permAs}$. 
  \end{proo}
 
\subsection{${\permAs}$-algebras up to homotopy} From the explicit description of the minimal model of the permutad ${\permAs}$ we get the following definition of a ${\permAs}_{\infty}$-algebra, analogous to the definition of an $A_{\infty}$-algebra.

A ${\permAs}_{\infty}$-algebra is a chain complex $(A,d)$ over $\KK$ equipped with linear maps of degree $n-2$:
$m_t: A^{\t n} \to A$
for any cell $t$ of the permutohedron $\texttt{P}^{n-2}$. These maps are supposed to satisfy the following properties: 
$$\partial(m_t) = \sum_s \pm m_s,$$
where the sum is over the cells $s$ of codimension 1 in the boundary of the cell $t$.

\noindent {\sc Examples.} Let us adopt the shuffle notation for the cells of the permutohedron. So the top cell of $\texttt{P}^{n-2}$ is $\{1\ 2 \ \ldots n-1\}$ and the map $m_{\{1\ 2 \ \ldots\ n-1\}}$ is playing the role of the operadic operation $m_n$.  In low dimensions the formulas are the following:

\begin{alignat*}
\partial(m_{\{1 \}}) &= 0\\
\partial(m_{\{1 2 \}}) &=  m_{\{1\ | \ 2 \}} -  m_{\{2\ | \ 1 \}}\\
\partial(m_{\{1 2  3 \}}) &= m_{\{12\ | \ 3 \}} + m_{\{2\ |\ 1 3 \}}+ m_{\{2 3\ | \ 1 \}}- m_{\{1\ |\ 2 3 \}}- m_{\{1 3\ | \ 2 \}}- m_{\{3\ |\ 1 2 \}}.
\end{alignat*}

The interest of this structure lies in the following ``Homotopy Transfer Theorem": if a chain complex $(W,d)$ is  an algebra over the permutad ${\permAs}$, then any deformation retract $(V,d)$ of $(W,d)$ is naturally equipped with a structure of ${\permAs}_{\infty}$-algebra. This is part of a Koszul duality theory for permutads, which will be worked out elsewhere.


\section{The permutad of associative algebras with derivation}

We describe explicitly the permutad of associative algebras equipped with a derivation. We show that, in arity $n$, it is the algebra of noncommutative polynomials in $n$ variables. Recall that, in the operad framework, it is the commutative polynomials which occur, cf.\ \cite{AsDer}.

\subsection{Permutads with $1$-ary operations} In the previous sections we supposed, for simplicity, that a permutad had only one $1$-ary operation, namely $\id$.  But there is no obstruction to extend this notion. When working with the leveled planar trees, for instance, it suffices to admit the vertices with one input (for instance the ladders). We need this generalization for the following  example.

\subsection{The permutad ${\permAsDer}$} We denote by ${\permAsDer}$ the permutad which is generated by a unary operation $D$ and a binary operation $\mu$, which satisfy the following relations:
\begin{displaymath}
\left\{
\begin{array}{rcl}
\mu \circ_{1} \mu & = & \mu \circ_{2} \mu\ ,\\
D \circ_{1}\mu & = & \mu \circ_{1}D + \mu \circ_{2} D\ ,\\
(\alpha\circ _{i}D)\circ _{j}\mu &=& (\alpha\circ _{j}\mu)\circ _{i}D\ ,\\
(\alpha\circ _{i}\mu)\circ _{j+1}D &=& (\alpha\circ _{j}D)\circ _{i}\mu\ ,
\end{array} \right.
\end{displaymath}
for any operation $\alpha$ and $i<j$.

Observe that the first relation is the associativity of $\mu$, the second relation is saying that $D$ is a derivation, the third and fourth relations say that the operations $D$ and $\mu$ commute for parallel composition.

\begin{thm}\label{permAsDerexplicit} As a vector space ${\permAsDer}_{n}$ is isomorphic to the space of noncommutative polynomials in $n$ variables:
$${\permAsDer}_{n} = \KK\langle x_{1}, \ldots , x_{n}\rangle .$$
Let $t:\u{n} \to \u{2}$ be the surjective map given by the unshuffle $\{i_1, \ldots ,i_m\ |\ j_1 , \ldots , j_{n}\}$. The composition map $\circ_{t}$ is given by
\displaylines{
(P\circ_{t}Q)(x_{1}, \ldots , x_{n+m-1})= \hfill \cr
\qquad \qquad \qquad P(x_{j_1}, \ldots, x_{j_{i-1}}, x_{i_1}+ \cdots + x_{i_m},x_{j_i},  \ldots , x_{j_{n}})Q(x_{i_1}, \ldots , x_{i_m}).}

Under this identification the operations $\id, D, \mu$ correspond to $1_{1}, x_{1}\in \KK\langle x_{1} \rangle$ and to $1_{2}\in \KK\langle x_{1}, x_{2}\rangle$ respectively. More generally the operation 
$$\big(\cdot ((\mu\circ_{j_{k}}D)\circ_{j_{k-1}}D)\cdots \circ_{j_{1}}D\big)$$
 corresponds to the noncommutative monomial $x_{j_{k}}x_{j_{k-1}}\cdots x_{j_{1}}$.
\end{thm}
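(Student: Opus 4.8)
The plan is to set up the free permutad $\permMag(D,\mu)$ generated by the unary $D$ and the binary $\mu$, present its arity-$n$ component explicitly using the $\circ_i$ presentation of Theorem~\ref{thmbinarypartial} (suitably extended to allow a unary generator, as in~\ref{trees} and the preceding subsection), and then compute the quotient by the ideal generated by the four relations. Concretely, I would first argue that since associativity already forces the $\mu$-part of any operation to collapse (by the Corollary to Proposition~\ref{formulalength}, the associative part contributes a one-dimensional space in each arity), every element of ${\permAsDer}_n$ can be rewritten, using relations (3) and (4) to slide all the $D$'s past the $\mu$'s into a fixed normal position, as a composite of the form $\big(\cdots((\mu\circ_{j_k}D)\circ_{j_{k-1}}D)\cdots \circ_{j_1}D\big)$ acting on $n$ inputs. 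The claim is that such normal forms are in bijection with noncommutative monomials $x_{j_k}x_{j_{k-1}}\cdots x_{j_1}$, and that taking linear combinations gives the stated isomorphism ${\permAsDer}_n \cong \KK\langle x_1,\dots,x_n\rangle$.

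The core of the argument is to check that the proposed operadic substitution on $\KK\langle x_1,\dots,x_n\rangle$, namely
$$(P\circ_t Q)(x_1,\dots,x_{n+m-1}) = P(x_{j_1},\dots,x_{j_{i-1}}, x_{i_1}+\cdots+x_{i_m}, x_{j_i},\dots,x_{j_n})\,Q(x_{i_1},\dots,x_{i_m}),$$
actually defines a permutad, i.e.\ satisfies the sequential composition axiom $(\diamondsuit)$ of Theorem~\ref{thmpartial}, \emph{and} that the generators $1_1$, $x_1\in\KK\langle x_1\rangle$, $1_2\in\KK\langle x_1,x_2\rangle$ satisfy the four defining relations of ${\permAsDer}$. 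The associativity relation is immediate because polynomial multiplication is associative and the substitution rule for $\mu=1_2$ just inserts a sum of two variables. The derivation relation $D\circ_1\mu = \mu\circ_1 D + \mu\circ_2 D$ becomes, under the identification, the identity $x_1+x_2 = x_1\cdot 1 + 1\cdot x_2$ inside $\KK\langle x_1,x_2\rangle$ — i.e.\ it simply expresses that substituting $x_1\mapsto x_1+x_2$ into the monomial $x_1$ yields $x_1+x_2$ — so it holds tautologically. The parallel-commutation relations (3) and (4) hold because the substitution rule only ever replaces a block of consecutive (in the unshuffle) variables by their sum and never reorders the two tensor factors, so performing two such substitutions at disjoint places commutes; this is precisely the reason the monomial $x_{j_k}\cdots x_{j_1}$ is a well-defined invariant of the normal form.

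This gives a surjective map of permutads ${\permAsDer}\to \KK\langle -\rangle$ (where $\KK\langle-\rangle$ denotes the permutad $n\mapsto \KK\langle x_1,\dots,x_n\rangle$ with the substitution above), sending the stated composite to $x_{j_k}\cdots x_{j_1}$. Injectivity — equivalently, that the normal forms are linearly independent in ${\permAsDer}_n$ — follows by a dimension count: the normal forms in arity $n$ are indexed by finite sequences $(j_k,\dots,j_1)$ with each $j_\ell\in\{1,\dots,n\}$, exactly the noncommutative monomials in $n$ variables, and the surjection already realizes all of these as distinct elements on the target side. The main obstacle I expect is the bookkeeping in the normal-form reduction: showing rigorously that relations (3) and (4), together with associativity of $\mu$, suffice to bring an \emph{arbitrary} composite of $D$'s and $\mu$'s in the free permutad into the claimed normal form, with no residual relations among the normal forms. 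This is where one must be careful about the indices shifting under $\circ_i$ (as in the proof of Theorem~\ref{thmbinarypartial}), and about the interaction between the associativity collapse of the $\mu$-subpermutad and the sliding of the unary $D$'s; I would handle it by induction on the number of $\mu$-vertices, peeling off the topmost $\mu$ and using (3)–(4) to push any $D$'s sitting "inside" a factor outward, reducing to the inductive hypothesis.
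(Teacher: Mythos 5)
Your proposal is essentially a self-contained proof, whereas the paper's own ``proof'' of Theorem \ref{permAsDerexplicit} is a two-sentence deferral: it cites \cite{AsDer}, where the analogous statement is proved in the operadic setting (yielding \emph{commutative} polynomials) for the partial operations $\circ_i$, and asserts that the argument for a general $\circ_t$ is similar. So you are not taking a different route so much as actually writing down the route the paper leaves implicit: (a) a rewriting argument showing the composites $\big(\cdots(\mu\circ_{j_k}D)\cdots\circ_{j_1}D\big)$ span, (b) verification that the substitution rule on $\KK\langle x_1,\ldots,x_n\rangle$ satisfies $(\diamondsuit)$ and that the images of $\id, D,\mu$ satisfy the four defining relations, and (c) linear independence of the normal forms deduced from the fact that they map to the distinct monomials $x_{j_k}\cdots x_{j_1}$. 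This is the standard and correct architecture, and your observation that the permutadic absence of a parallel-composition relation for two $D$'s is exactly what makes the answer \emph{noncommutative} polynomials is the right conceptual point. What the paper's approach buys is brevity; what yours buys is an actual proof, including the check of $(\diamondsuit)$ for general unshuffles $t$, which is the only place where the permutad case genuinely differs from the $\circ_i$ case treated in \cite{AsDer}.

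One slip to fix: in your ``main obstacle'' paragraph you claim that relations (3) and (4) together with associativity suffice to bring an arbitrary composite into normal form. That is false as stated. Relations (3) and (4) only permute the temporal order in which a $D$ and a $\mu$ are attached at disjoint positions; they never move a $D$ from above a $\mu$ to below it. A composite such as $D\circ_1(\mu\circ_1\mu)$ requires the derivation relation (2) to be expanded, and its image $x_1+x_2+x_3$ is a \emph{sum} of normal forms, not a single one --- so the reduction must produce linear combinations, driven by relation (2). You do verify relation (2) in the target, so nothing downstream breaks, but the rewriting system you induct on must include it. With that correction, and with the $(\diamondsuit)$ verification actually carried out (a routine but necessary computation, since both sides reduce to the same product $P\cdot Q\cdot R$ with block sums substituted according to the common three-vertex surjection), your argument is complete.
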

Graphically the operation $x_{j_{k}}x_{j_{k-1}}\cdots x_{j_{1}}$ is pictured as a  planar decorated tree with levels as follows (example: $x_{1}x_{2}x_{n}x_{2}$) :
$$\xymatrix@R=4pt@C=4pt{
\ar@{-}[dddd] & \ar@{-}[d] & \cdots &  \ar@{-}[dd] \\
 & D\ar@{-}[dd]  & \cdots & \\
 &   & \cdots & D \ar@{-}[dd] \\
 & D\ar@{-}[d]  &  \cdots & \\
D\ar@{-}[drr]  &*{} \ar@{-}[dr]  & \cdots &*{} \ar@{-}[dl]  \\
                    &                   &*{} \ar@{-}[d]    &   \\
                                &                   &*{}      &     }
$$

\begin{proo} Up to a minor change of notation and terminology this result has been proved in \cite{AsDer} for the case $\circ_t = \circ_i$, that is when the inverse image of $1$ for $t$ is made of consecutive elements. The proof for any $t$ is similar.
\end{proo}


\section{Permutads and shuffle operads}\label{s:ShuffleOp}

Following the work of E.\ Hoffbeck \cite{Hoffbeck10}, V.\ Dotsenko and A.\ Khoroshkin introduced in \cite{DotsenkoKhoroshkin09} the notion of shuffle operad. It is based on  the combinatorial objects with substitution called shuffle trees. It turns out that the surjective maps can be considered as particular shuffle trees, whence the relationship between shuffle operads and permutads.

\subsection{Shuffle trees and shuffle operads}\label{shuffleTreeOp} A \emph{shuffle tree} is a planar tree equipped with a labeling of the leaves by integers $\{0, 1 , 2 ,  \ldots , n \}$ satisfying some condition stated below. First, we label each edge of the tree as follows. The leaves are already labelled. Any other edge is the output of some vertex $v$ of the tree. We label this edge by $\mathrm{min}(v)$ which is the minimum of the labels of the inputs of $v$. Second, the condition for a labeled tree to be called a shuffle tree is that, for each vertex, the labels of the inputs, read from left to right, are increasing.

\textsc{Example:} 

$$\ShuffleTree$$

The substitution of a shuffle tree at a vertex of a shuffle tree still gives a shuffle tree. Hence we can define a monad on arity-graded modules (see \cite{DotsenkoKhoroshkin09}, or \cite{LV}, Chapter 8 for details). An algebra over this monad is by definition a \emph{shuffle operad}.

\subsection{Left combs and permutads}\label{leftcombs} A \emph{left comb} is a planar tree such that, at each vertex, all the inputs but possibly the most left one, is a leaf. A \emph{shuffle left comb} is a shuffle tree whose underlying tree is a left comb. There is a bijection between shuffle left combs and  surjective maps as follows. Order the vertices of the left comb downwards from $1$ to $k$. The surjective map $f:\u{n}\to \u{k}$ is such that $f^{-1}(j)$ is the set of labels of the leaves pertaining to the vertex number $j$. Here is an example:

$$\xymatrix{
0 & 1 & 3 & 4 & 2 & 5 \\
 & *{}\ar@{-}[ul]\ar@{-}[u] \ar@{-}[ur] \ar@{-}[urr]&&&&\\
&&&& *{}\ar@{-}[ulll] \ar@{-}[uu] \ar@{-}[uur] \ar@{-}[d]&\\
&&&&*{} &\\
}$$
gives the surjective map:
$$\xymatrix{
  *{\bullet}\ar@{-}[d] & *{\bullet}\ar@{-}[drr] & *{\bullet}\ar@{-}[dll] & *{\bullet}\ar@{-}[dlll] & *{\bullet}\ar@{-}[dl]\\
  *{\times}& &&*{\times}& &      \\
  }$$
  
\begin{prop}\label{prop:PermtoShuffleOp} The bijection between surjections and shuffle left combs is compatible with substitution. As a consequence any shuffle operad gives rise to a permutad. For instance any symmetric operad gives rise to a permutad.
\end{prop}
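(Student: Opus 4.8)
The plan is to verify that the bijection $\Phi\colon \Surj_n \to \{\text{shuffle left combs with } n+1 \text{ leaves}\}$ of Paragraph~\ref{leftcombs} intertwines the two substitution operations. First I would pin down precisely what substitution of shuffle trees does on the subcategory of left combs. Given a left comb $c$ with vertices $1,\dots,k$ (ordered downwards) and, at each vertex $j$, a shuffle left comb $c_j$ to be grafted, the result of substituting is a planar tree; the key combinatorial observation is that grafting a left comb onto a vertex of a left comb again yields a left comb, and the induced order on the vertices of the composite is the lexicographic one: all vertices coming from $c_1$, then all from $c_2$, and so on. This matches exactly the target-side recipe for substitution of surjective maps in~\ref{substitution}, where $(t;t_1,\dots,t_k)(a) = m_1+\cdots+m_{j-1}+t_j(b)$ when $t(a)=j$ and $a$ is the $b$-th element of $t^{-1}(j)$.

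The main work is bookkeeping the leaf labels. On the shuffle-tree side, substitution at a vertex requires relabeling leaves so that the global labeling remains a shuffle labeling; on the surjection side, the formula in~\ref{substitution} reads off the new vertex a leaf maps to via its \emph{relative} position $b$ inside $t^{-1}(j)$. I would check that these two relabeling conventions agree: under $\Phi$, the leaves pertaining to vertex $j$ of the comb $c$ are precisely $f^{-1}(j)$ listed in increasing order, and the shuffle condition at each vertex of a left comb is automatically the statement that these labels are increasing left-to-right — which is exactly the defining feature that makes $\Phi$ well-defined in the first place. So the compatibility reduces to the associativity of substitution for surjective maps (already noted in~\ref{substitution}) transported across $\Phi$; concretely, I would show $\Phi(t;t_1,\dots,t_k) = \Phi(t)\circ(\Phi(t_1),\dots,\Phi(t_k))$ by comparing, leaf by leaf, which vertex each leaf sits over in both constructions.

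Once the bijection is shown to be a map of monads (restricted appropriately — the monad of shuffle trees restricted to left combs is isomorphic to the monad $\PPP$ of surjections), the consequences are formal. A shuffle operad is an algebra over the shuffle-tree monad; restricting its structure maps to the submonad of shuffle left combs and transporting along $\Phi$ yields an algebra over $\PPP$, i.e.\ a permutad. For the last sentence, one invokes Dotsenko--Khoroshkin: a symmetric operad determines a shuffle operad by forgetting the $\Sy_n$-action down to the shuffle-tree combinatorics, and composing with the previous construction gives a permutad.

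The step I expect to be the genuine obstacle is not any single hard idea but the careful matching of the two leaf-relabeling conventions after an iterated substitution: one must make sure that the ``min'' rule for labeling internal edges of a shuffle tree, after grafting, reproduces exactly the additive shift $m_1+\cdots+m_{j-1}$ in the surjection formula, and that no reordering of leaves is forced that would fall outside the left-comb world. I would handle this by first treating a single substitution at a single vertex in full detail — reducing the general case to associativity of surjection-substitution, which is already granted — rather than attempting the iterated case directly.
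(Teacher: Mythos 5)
Your proposal is correct and follows the same route as the paper: the paper disposes of the first statement ``by direct inspection'' (which your leaf-by-leaf matching of the two substitution recipes fleshes out correctly --- left combs are closed under grafting, with the vertices of the composite ordered lexicographically), and then obtains the consequences exactly as you do, by restricting a shuffle-operad algebra structure to the submonad of shuffle left combs and by invoking Dotsenko--Khoroshkin for the passage from symmetric to shuffle operads. The only quibble is your phrase that compatibility ``reduces to associativity of substitution'' --- it is an independent direct check, not a consequence of associativity --- but your concrete plan of comparing, leaf by leaf, which vertex each leaf sits over is the right verification.
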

\begin{proo}  The first statement is proved by direct inspection. Since a shuffle operad is an algebra over the monad of shuffle trees, it suffices to restrict oneself to the shuffle left combs to get a permutad.

 It is shown in \cite{DotsenkoKhoroshkin09} that any symmetric operad $\PP=\{\PP(n)\}_{n\geq 1}$ gives a shuffle operad $\PP^{\text sh}=\{\PP^{\text sh}_{n}\}_{n\geq 1}$ with $\PP^{\text sh}_{n}= \PP(n)$. Hence a fortiori any symmetric operad gives rise to a permutad.
\end{proo}

\subsection{On the ``partial'' presentation of a permutad} Replacing surjections by left comb shuffle trees we observe that any left comb shuffle tree can be obtained by successive substitutions of left comb shuffle trees which have only 2 vertices.  In fact these trees are the only left comb shuffle trees with 2 levels and also the only ones which give partial operations (compare with section 8.2 of \cite{LV}).

\subsection{Algebras over a permutad} We can define an algebra $A$ over any permutad $\PP$ as a morphism of permutads $\PP\to \End(A)$ where the permutad structure of $\End(A)$ comes from its structure of symmetric operad, hence shuffle operad, hence permutad by restriction. But observe that, in the case of binary permutads, it is  different from the notion of algebra defined in \ref{algbinarypermutad} since here it involves the action of the symmetric group.

\subsection{The permutad associated to the symmetric operad $Ass$} We consider the symmetric operad $Ass$ encoding the associative algebras. We know that $\PP(n)=\KK[\Sy_n]$. Hence the shuffle operad $Ass^{sh}$ associated to it is such that $Ass^{sh}_n=\KK[\Sy_n]$. Let us denote by ${\permAs}^{sh}$ this shuffle operad viewed as a permutad. It has two linear generators in arity $2$ that we denote by $1$ (the identity in the group $\Sy_2$) and $\tau$ (the flip in $\Sy_2$) respectively. These two operations generate $8$ operations in arity $3$. Since $Ass^{sh}_3=\KK[\Sy_3]$ is of dimension $6$, it means that there are two quadratic relations. An easy computation shows that they are:

$$\xymatrix{
*{\bullet}\ar@{-}[d] & *{\bullet}\ar@{-}[d]  && *{\bullet}\ar@{-}[rd] & *{\bullet}\ar@{-}[ld]  \\
*{\times\atop  1}       & *{\times\atop   \tau}                &=& *{\times\atop   \tau}   & *{\times\atop   1}          
}$$
\vskip 0.5cm
$$\xymatrix{
*{\bullet}\ar@{-}[d] & *{\bullet}\ar@{-}[d]  && *{\bullet}\ar@{-}[rd] & *{\bullet}\ar@{-}[ld]  \\
*{\times\atop   \tau}        & *{\times\atop   1}   &=& *{\times\atop   1}    &  *{\times\atop   \tau}          
}$$

This permutad looks analogous to the ns operad ${\text Dend}$ encoding dendriform algebras. Indeed, for each of them the dimension of the space of operations is the same as the dimension of the free object on one generator, shifted by one.


\section{Appendix 1: Combinatorics of  surjections on finite sets}\label{appendixsurj} 

In this paper we are using four different combinatorial ways of encoding the same objects: shuffle left combs, shuffles, surjections, leveled planar trees. In this appendix we describe explicitly three of the bijections between these families of combinatorial objects.

$$\begin{matrix}
\textrm{shuffle left comb} & \textrm{shuffle} & \textrm{surjection} & \textrm{leveled planar tree} \\
&&&\\
n+1 \textrm{ leaves, } k \textrm{ vertices} & \ss\in \Sy_n,\ k \textrm{ subsets} & t:\u{n}\to \u{k} & n+1 \textrm{ leaves, } k \textrm{ levels} \\
&&&\\
\arbreAsh & [1]  &
\xymatrix@R=1pt@C=1pt{
\bullet \ar@{-}[dd]\\
&\\
  *{\times} \\} 
   &\arbreA\\
\arbreABsh & [1 | 2 ] &
\xymatrix@R=1pt@C=1pt{
\bullet \ar@{-}[dd]& & \bullet  \ar@{-}[dd]\\
&&\\
  *{\times} & & *{\times} \\} 
   & \arbreAB\\
\arbreABshdeux & [2 | 1 ] &
\xymatrix@R=1pt@C=1pt{
\bullet \ar@{-}[rrdd]& & \bullet  \ar@{-}[lldd]\\
&&\\
  *{\times} & & *{\times} \\} 
   & \arbreBA\\
\arbreABCsh & [1 | 2 | 3] &
\xymatrix@R=1pt@C=1pt{
\bullet \ar@{-}[dd]& & \bullet  \ar@{-}[dd]& &\bullet \ar@{-}[dd]\\
&&&&\\
  *{\times} & & *{\times} && *{\times}\\} 
   & \arbreABC\\
\arbreAACsh & [1 2 | 3] &
\xymatrix@R=1pt@C=1pt{
\bullet \ar@{-}[rdd]& & \bullet  \ar@{-}[ldd]& &\bullet \ar@{-}[dd]\\
&&&&\\
 & *{\times} & && *{\times} \\} 
  & \arbreAAC\\
\arbreABCshdeux & [1 | 3 | 2 ] &
\xymatrix@R=1pt@C=1pt{
\bullet \ar@{-}[dd]& & \bullet  \ar@{-}[rrdd]& &\bullet \ar@{-}[lldd]\\
&&&&\\
  *{\times} & & *{\times} && *{\times}\\} 
  & \arbreACB\\
\arbreAACshdeux& [1 3 | 2 ] &
\xymatrix@R=1pt@C=1pt{
\bullet \ar@{-}[rdd]& & \bullet  \ar@{-}[rdd]& &\bullet \ar@{-}[llldd]\\
&&&&\\
 & *{\times} & & *{\times} & \\} 
 & \arbreACA
\end{matrix}$$

A shuffle left comb is a left comb whose leaves have been enumerated so that, at each vertex, the numbers of the leaves are increasing from left to right. The bijection from shuffles to shuffle left combs is given as follows. Let $\ss =[l_1^1,\dots ,l_{i_1}^1 | l_1^2,\dots | l_1^{k},\dots ,l_{i_k}^k]$ be a $k$-shuffle. The number of inputs of the upper vertex of the left comb is $1+i_1$, of the next one it is $1+i_2$, etc. The decoration of the leaves are 
$0 , l_1^1,\dots ,l_{i_1}^1,l_1^2,\dots ,l_1^{k},\dots ,l_{i_k}^k$.

The bijection from shuffles to surjections is given as follows. Let $\ss$ be a shuffle as above. The surjective map $t:\u{n}\to \u{k}$ is determined by
$t^{-1}(j) =\{  l_1^{j},\dots ,l_{i_j}^j\}$.

 We consider the set of leveled planar rooted trees with $n+1$ leaves.  By ``leveled" we mean that any vertex is assigned a level. Of course the levels of the vertices are compatible with the structure of the tree. For instance the following trees are admissible and different:
$$\arbreACB \quad  \arbreBCA$$
The bijection from the leveled trees to the surjections is given as follows. First, we label the leaves from left to right by $0, 1, \ldots , n$. Second, we label the levels downwards from $1$ to $k$. Let $t$ be the searched surjection. The integer $t(i)$ is the number of the level which is attained by a ball which is dropped in between the leaves $i$ and $i+1$.


\section{Appendix 2: the permutohedron}\label{appendixassoc} We define and construct the permutohedron together with several ways of labelling its cells: either by  surjections or by leveled planar rooted trees. Then, we prove a lemma on some subposet of the weak Bruhat poset of permutations.

\subsection{The permutohedron as a cell complex}\label{cellcplx} For any permutation  $\ss\in \Sy_{n}$ we associate a point $M(\ss)\in \RR^n$ with coordinates
$$M(\ss) =(\ss(1), \ldots , \ss(n)).$$
Since $\sum_{i}\ss(i)= \frac{n(n+1)}{2}$, the points $M(\ss)$ lie in the hyperplane $\sum_{i}x_{i}= \frac{n(n+1)}{2}$ of $ \RR^n$. The convex hull of the points $M(\ss), \ss\in \Sy_{n}$, forms a convex polytope $\texttt{P}^{n-1}$ of dimension $n-1$, whose vertices are precisely the $M(\ss)$'s.
$$\begin{array}{lccccccc}
\texttt{P}^n&\Kzero &&  \KunA &&  \PdeuxA &&  \PtroisA \\
&&&&&&&\\
&&&&&&&\\
n=& 0 && 1 && 2 && 3\\
\end{array}$$

The polytope $\texttt{P}^{n}$ is called the \emph{permutohedron}.  It is a cell complex. The cells of the permutohedron $\texttt{P}^{n-1}$ are in one to one correspondence with the surjective maps $\u{n} \epi \u{k}$, cf.\ for instance \cite{PalaciosRonco}. For $k=n$ we obtain the bijection between the set of vertices and the permutations since any surjective map between finite sets is bijective. For $k=1$ there is only one map which corresponds to the big cell. More generally a surjective map $t=\u{n} \epi \u{k}$ corresponds to a $n-k$ dimensional cell $\texttt{P}^{n-1}$. Let $i_{j}= \#  t^{-1}(j)$. The subcell corresponding to $t$ is of the form $\texttt{P}^{i_{1}-1}\times \cdots \times \texttt{P}^{i_{k}-1}$.

\subsection{Examples}\label{list} The permutohedron in dimension $1$ and $2$ together with their associated surjective maps:

$$\xymatrix@R=16pt@C=1pt{
 *{} \ar@{-}[d] & & *{} \ar@{-}[d] & &&& &*{} \ar@{-}[dr] & &*{} \ar@{-}[dl] &  &&& &*{} \ar@{-}[drr] & & *{} \ar@{-}[dll] \\
*{} & *{} & *{} & *{} & *{} & *{} & *{} & *{} & *{} & *{} &  *{}& *{} & *{}& *{} &  *{}& *{}& *{} & *{}\\
 & \bullet\ar[rrrrrrrrrrrrrr] &&&&&&&&&&&&&&\bullet &\\
 }$$

In the following picture the surjections with three, resp.\ two, resp.\ one, outputs encode the vertices, resp.\ edges, resp.\ 2-cell of $\texttt{P}^{2}$:
\begin{figure}[h]
\centering
\includegraphics[scale=0.3]{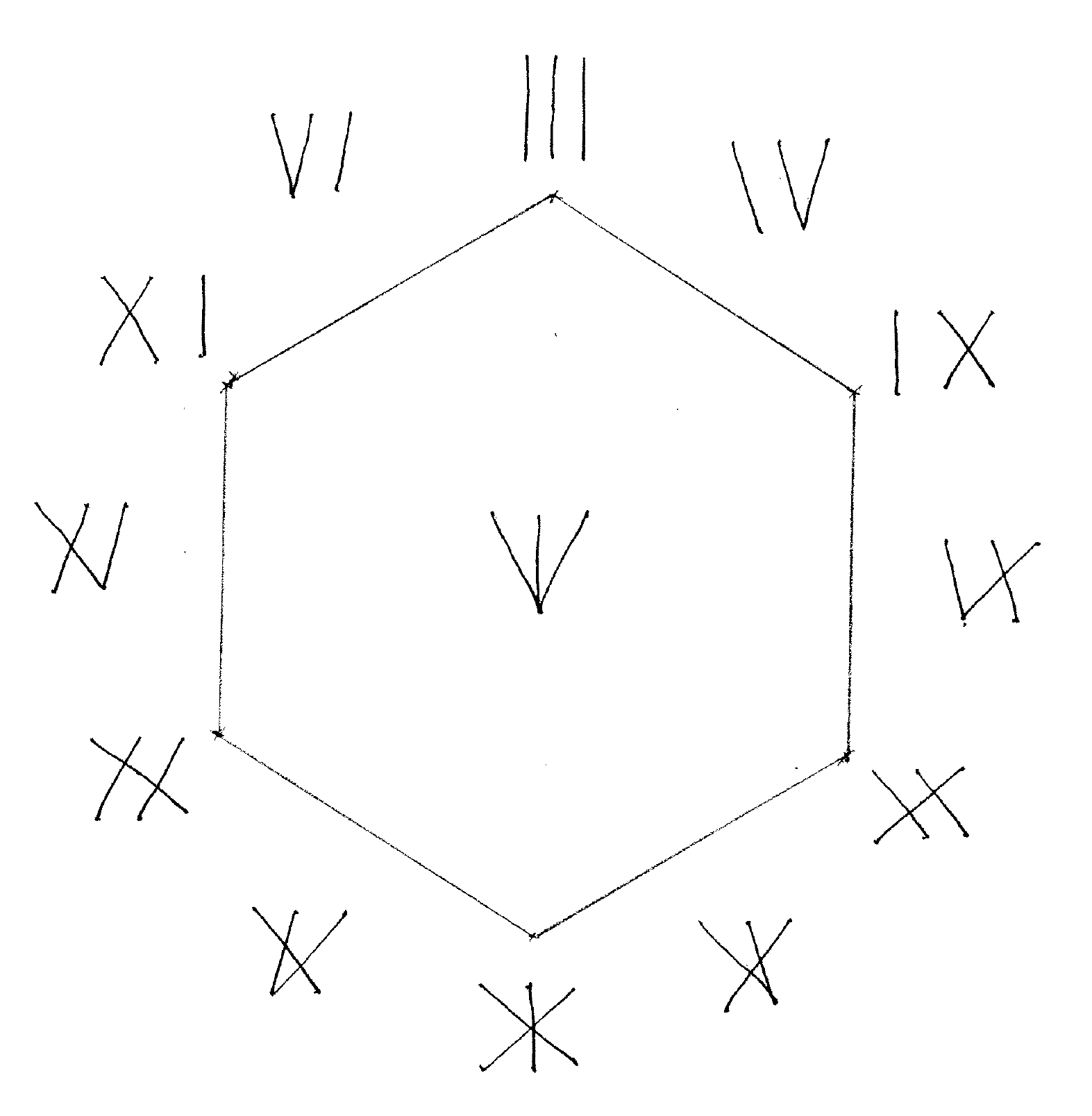}
\caption{The permutad clock} \label{clock}
\end{figure}

In figure \ref{figuregraphpermutations} below we indicate the bijections labelling the vertices.  

\subsection{The chain complex of the permutohedron}\label{Pchaincplx} Since the permutohedron is a cell complex, we can take its associated chain complex $C_{\bullet}(\texttt{P}^{n})$. In degree $k$, the space 
$C_{k}(\texttt{P}^{n})$ is spanned by the $k$-dimensional cells, that is by the surjective maps $\u{n+1} \to \u{n+1-k}$. The boundary map on the big cell $c_{n}$ is given by the formula
$$ d(c_{n})= \sum_{t} \sgn(t) t  $$
where the sum is over all the surjective maps $t$ with target $\u{2}$. The sign $\sgn(t)$ involved in this formula is $\sgn(t):= \sgn(\ss _t)(-1)^{\# t^{-1}(1)}$ where  $\ss _t$ is the shuffle associated to the surjective map $t$ (cf.\ Lemma \ref{lemmasurjshuffle}). See \ref{list} for examples.

\subsection{Vertices of the permutohedron and leveled planar binary trees}\label{trees} The bijection from surjections to leveled planar trees can be restricted to the permutations and the leveled planar binary trees.  Forgetting the levels of the trees gives a surjective map
$$\varphi: \Sy_n \to {\text PBT}_{n+1}.$$
This map appears naturally when comparing the free dendriform algebra on $n$ generators with the free associative algebra, cf.\ \cite{Loday01}. 

\subsection{On a property of the weak Bruhat order}

The 1-skeleton of the permutohedron can be oriented such that each edge 
$$\xymatrix@R=1pt@C=20pt{\ss &\omega\\
\bullet\ar[r] & \bullet} $$
corresponds to the left multiplication by some Coxeter generator $\omega = s_{i}\ss $ such that $\ell(\omega)= \ell(\ss)+1$. The partial order spanned transitively by this covering relation is called the \emph{weak Bruhat order}. So the 1-skeleton is the geometric realization of a poset with minimal element $[1\ 2\ \ldots\ n]=1_{n}$ and maximal element $[n\ n-1\ \ldots \ 1]= s_{1}s_{2}\ldots s_{n-1}\ldots s_{2}s_{1}$.

Under the bijection between permutations and leveled binary trees, there are two different \emph{types of covering relations}:

(1) those obtained through a local application of $\arbreAB\mapsto \arbreBA$, which correspond to $\omega = s_i\ss$ for $\ss $ satisfying that, for all $\ss ^{-1}(i)<j<\ss^{-1}(i+1)$, the integer $\ss (j)<i$,

(2) those obtained by some change of levels, like $\arbreACB\mapsto\arbreBCA$. which correspond to $\omega = s_i\ss$ for $\ss $ satisfying that there exist al least one integer $j$ such that $\ss ^{-1}(i)<j<\ss^{-1}(i+1)$ and $\ss (j)> i+1$.

Edges determined by a covering relation of type (1) are characterized by the following property of their leveled tree: there is only one vertex per level.

In $\texttt{P}^{2}$ there is only one edge which corresponds to a covering relation of type (2). It corresponds to the surjection:

$$\xymatrix@R=8pt@C=8pt{
\bullet \ar@{-}[rdd]& & \bullet  \ar@{-}[rdd]& &\bullet \ar@{-}[llldd]\\
&&&&\\
 & \times & & \times & \\
 }$$
(dotted arrow in Figures \ref{figuregraphpermutations} and \ref{figuregraphtrees}). 
\medskip

The covering relation, from a permutation to another one, is obtained by left multiplication by some Coxeter generator $s_{i}$. The effect consists in exchanging the elements $i$ and $i+1$ in the image of the permutation. If the covering relation implied is of type (1), we call it an \emph{admissible move}. If the elements which lie in the interval between $i$ and $i+1$ contains only elements which are less than $i$, then multiply by $s_{i}$ is admissible. For instance, for $n=3$, the only covering relation which is not admissible is $s_{1}[132]$, see Figure \ref{figuregraphpermutations} and Figure \ref{figuregraphtrees}.

\begin{figure}[h]
\centering
$$\permutohedronLemma$$
\caption{$\texttt{P}^{2}$ and bijections} \label{figuregraphpermutations}
\end{figure}

\bigskip

\begin{figure}[h]
\centering
{\centerline {\scalebox{0.5}{\includegraphics{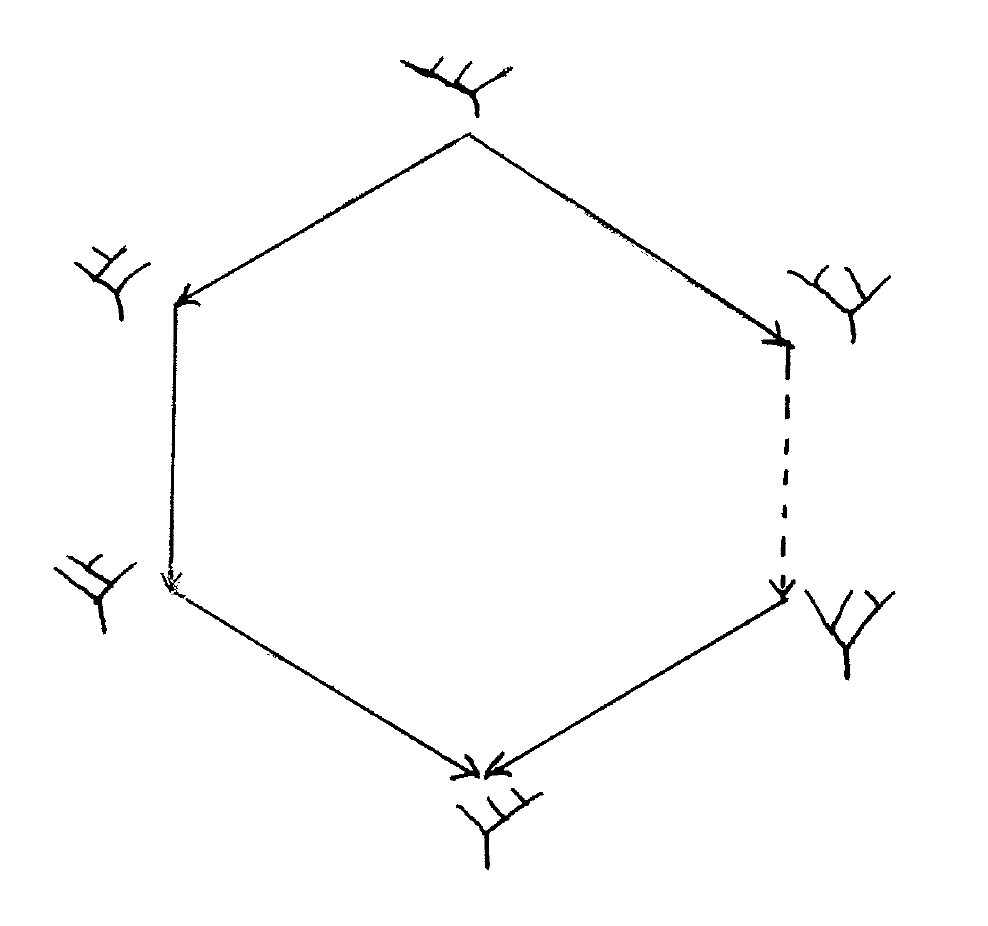}}}}
\caption{$\texttt{P}^{2}$ and trees}\label{figuregraphtrees}
\end{figure}

The following result is used in the proof of Proposition \ref{formulalength} which determines the associative permutad.

\begin{prop}\label{propconnected} The subposet of the weak Bruhat poset $\Sy_{n}$ made of the covering relations of type (1) is a connected graph.
\end{prop}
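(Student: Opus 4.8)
The plan is to show that any permutation $\ss \in \Sy_n$ can be connected to the identity $1_n$ using only covering relations of type (1), where a type-(1) covering relation $\ss \to s_i\ss$ with $\ell(s_i\ss) = \ell(\ss)+1$ is one for which every $j$ strictly between $\ss^{-1}(i)$ and $\ss^{-1}(i+1)$ satisfies $\ss(j) < i$. Equivalently, reading in the leveled-tree picture, these are exactly the edges whose leveled tree has one vertex per level. Since the graph is undirected (we may traverse a covering relation in either direction), it suffices to produce, for each $\ss \neq 1_n$, a neighbor $\ss'$ with $\ell(\ss') < \ell(\ss)$ reachable by a type-(1) move; then induction on length finishes the argument.

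First I would set up the descent combinatorics. If $\ss \neq 1_n$, then $\ss$ has at least one inversion, hence there is some $i$ with $\ss^{-1}(i) > \ss^{-1}(i+1)$; left-multiplying by $s_i$ lowers the length. The issue is that an arbitrary such $s_i$ need not give a type-(1) move: there could be a position $j$ between $\ss^{-1}(i+1)$ and $\ss^{-1}(i)$ with $\ss(j) > i+1$. The key step is therefore to choose $i$ cleverly. The natural candidate is to take $i+1$ to be \emph{maximal} among values that are "out of place" in a suitable sense — more precisely, I would look at the largest value $v$ such that $v$ appears to the left of $v-1$ in the one-line notation of $\ss$, i.e. $\ss^{-1}(v) < \ss^{-1}(v-1)$, and set $i = v-1$. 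With this maximal choice, any position $j$ lying strictly between $\ss^{-1}(i)$ and $\ss^{-1}(i+1)$ cannot carry a value exceeding $i+1$: a value $w > i+1$ sitting between them would itself participate in a descent forced by the extremal choice of $v$, contradicting maximality (one checks that $w$ would have $w-1$ to its right or would otherwise violate the choice). Hence multiplication by $s_i$ is an admissible (type-(1)) move and decreases the length.

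The main obstacle is exactly this extremality argument: verifying rigorously that the maximal "misplaced" value yields a move with \emph{no} intermediate entry larger than $i+1$. I would handle it by a careful case analysis on the relative positions of $\ss^{-1}(i)$, $\ss^{-1}(i+1)$, and the purported bad position $j$ with $\ss(j) = w > i+1$, using that $w \geq i+2$ forces, by maximality of $i+1$, that $w$ and $w-1$ appear in increasing order, and then tracing how $w, w-1, \ldots, i+2$ are laid out relative to the slot between $\ss^{-1}(i+1)$ and $\ss^{-1}(i)$ to derive a contradiction. Once this lemma is in place, induction on $\ell(\ss)$ gives a type-(1) path from every $\ss$ down to $1_n$, so any two permutations are connected through $1_n$, proving the subposet graph is connected. (Alternatively, one can phrase the whole argument in the leveled-tree language: repeatedly apply $\arbreBA \mapsto \arbreAB$ at the top level, which is always a one-vertex-per-level edge, to "comb" the tree down to the left comb corresponding to $1_n$; the value-theoretic bookkeeping above is just the translation of "the top-level move is always type (1)".)
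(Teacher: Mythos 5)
There is a genuine gap, and in fact the central claim of your strategy is false. You propose to find, for every $\ss\neq 1_{n}$, a \emph{downward} type-(1) edge, and then induct on length. But such an edge need not exist. Take $\ss=[2\,3\,1]\in\Sy_{3}$, so $\ss^{-1}(1)=3$, $\ss^{-1}(2)=1$, $\ss^{-1}(3)=2$. Its only descent is $i=1$ (only downward cover in the weak order is $[1\,3\,2]\to[2\,3\,1]$), and the unique position $j=2$ lying between $\ss^{-1}(2)=1$ and $\ss^{-1}(1)=3$ carries the value $3>i+1$, so this edge is of type (2). Hence $[2\,3\,1]$ has \emph{no} type-(1) downward neighbour at all; its only type-(1) neighbour is $[3\,2\,1]$, which lies \emph{above} it. Your extremal choice does not rescue this: the largest $v$ with $\ss^{-1}(v)<\ss^{-1}(v-1)$ is $v=2$, giving $i=1$, exactly the bad edge. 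So the "main obstacle" you flag (showing the maximal misplaced value yields an admissible move) is not a technical difficulty to be overcome by case analysis — the statement is simply untrue, and no monotone (length-decreasing) path from $[2\,3\,1]$ to $1_{3}$ exists in the type-(1) graph.

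The fix requires abandoning monotonicity. The paper's proof shows instead that \emph{every} covering relation $\ss\to s_{i}\ss$ of the weak order — including the type-(2) ones — can be simulated by a zig-zag path of type-(1) edges traversed in both directions (see Lemma \ref{lemmatech}: the six-step chain $[\ldots i\ldots i{+}2\ldots i{+}1\ldots]\rightsquigarrow[\ldots i{+}1\ldots i{+}2\ldots i\ldots]$ goes up and down), and then runs a double induction on $n-i$ and on the number of intermediate entries exceeding $i+1$. Since the full weak-order graph is connected, replacing each of its edges by such a path yields connectivity of the type-(1) subgraph. Any correct argument must, like this one, allow paths that increase the length before decreasing it.
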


Let us denote a permutation $\ss$ by its image $[\ss(1)\ldots \ss(n)]$. We first prove a Lemma.

\begin{lemma}\label{lemmatech} Let $\ss\in\Sy_n$ be a permutation such that  $\ss^{-1}(i)<\ss^{-1}(i+2)<\ss^{-1}(i+1)$, for some $1\leq i\leq n-1$, and, 
the unique integer $j$ such that $\ss ^{-1}(i)<j<\ss^{-1}(i+1)$ and $\ss(j)>i+1$ is $\ss^{-1}(i+2)$.
The permutation $s_i\ss$ may be obtained from $\ss $ by admissible moves. \end{lemma}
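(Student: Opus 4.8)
The plan is to replace the single type-(2) covering relation joining $\ss$ to $s_i\ss$ — note it really is a covering relation because $\ss^{-1}(i)<\ss^{-1}(i+1)$ — by an explicit path of five type-(1) covering relations; geometrically one travels around the hexagon that $\ss$ spans by permuting the three values $i,i+1,i+2$ among themselves the ``long way'', avoiding the dotted edge. First I would fix $p:=\ss^{-1}(i)$, $q:=\ss^{-1}(i+2)$, $r:=\ss^{-1}(i+1)$, so $p<q<r$ (and $i+2\leq n$). The hypothesis says $q$ is the only position strictly between $p$ and $r$ at which $\ss$ takes a value $>i+1$, and since the values $i,i+1,i+2$ already occupy $p,r,q$ respectively we conclude that every position $\ell$ with $p<\ell<r$ and $\ell\neq q$ satisfies $\ss(\ell)<i$; in particular every position lying strictly between two of $p,q,r$ carries a value $<i$, and such values will be invisible to the moves below.

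Next I would introduce $\ss_0:=\ss$, $\ss_1:=s_{i+1}\ss_0$, $\ss_2:=s_i\ss_1$, $\ss_3:=s_{i+1}\ss_2$, $\ss_4:=s_i\ss_3$, $\ss_5:=s_{i+1}\ss_4$. Each of these moves is left multiplication by $s_i$ or $s_{i+1}$, hence touches only the positions carrying the values $i,i+1,i+2$; a straightforward induction shows that throughout the sequence those three values occupy exactly $p,q,r$, in the successive orders $(i,i+2,i+1)$, $(i,i+1,i+2)$, $(i+1,i,i+2)$, $(i+2,i,i+1)$, $(i+2,i+1,i)$, $(i+1,i+2,i)$, while every other position is left unchanged. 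Reading off the last entry gives $\ss_5=s_i\ss$, so it remains only to verify that each pair $\{\ss_k,\ss_{k+1}\}$ is a type-(1) covering relation.

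For $k=1,2,3$ the length increases, and from the list of orders above one reads off that every position strictly between the two sites of the swapped pair carries a value $<i$, hence strictly below the Coxeter index performing the swap — precisely the condition for an admissible move. For $k=0$ and $k=4$ the length decreases, so one instead checks that $\ss_{k+1}\to\ss_k$, which is left multiplication by $s_{i+1}$, is a type-(1) covering relation: in $\ss_{k+1}$ the value $i+1$ precedes $i+2$ with only value-$<i$ positions in between, so admissibility again holds. Concatenating the five edges shows that $\ss$ and $s_i\ss$ lie in one connected component of the graph of admissible moves, which is the assertion. The one point needing care is the bookkeeping of the second paragraph — keeping track of $i,i+1,i+2$ through all five swaps and checking that the small values wedged between $p$ and $r$ never obstruct a step — and it is exactly the hypothesis on $\ss$ that makes it go through, the moral being that a hexagon with one edge removed is still connected.
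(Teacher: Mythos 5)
Your proof is correct and follows essentially the same route as the paper: the identical five-edge path around the hexagon through the orders $(i,i+2,i+1)\to(i,i+1,i+2)\to(i+1,i,i+2)\to(i+2,i,i+1)\to(i+2,i+1,i)\to(i+1,i+2,i)$. The paper merely lists this sequence of permutations, whereas you additionally verify the admissibility of each edge (including the two traversed against the Bruhat orientation), which is a welcome but not substantively different elaboration.
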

\begin{proof} It suffices to show that one can exchange $i$ and $i+1$. It follows from the following sequence of moves:
\begin{gather*}
[\ldots i \ldots i+2  \ldots i+1 \ldots]\\
[\ldots i \ldots i+1  \ldots i+2 \ldots]\\
[\ldots i+1 \ldots i  \ldots i+2 \ldots]\\
[\ldots i+2 \ldots i  \ldots i+1 \ldots]\\
[\ldots i+2 \ldots i+1  \ldots i \ldots]\\
[\ldots i+1 \ldots i+2  \ldots i \ldots] \qedhere\end{gather*}
\end{proof}

\begin{proof} [Proof of Proposition \ref{propconnected}] We will show that, starting with some permutation $\ss$, there is always a path of admissible moves leading to the permutation with $i$ and $i+1$ exchanged. We use a double recursive argument on $l=n-i$ and on the number of elements between $i$ and $i+1$ in the image of $\ss$ which are larger that $i+1$.

We need to show that for any permutation $\ss\in S_n$ such that $\ss^{-1}(i)<\ss ^{-1}(i+1)$, for some $1\leq i\leq n-1$, there exists a collection of permutations $\ss_1,\ldots ,\ss_m$ such that :\begin{enumerate}
\item $\ss_1 =\ss$ and $\ss_m=s_i\ss $,
\item for all $1\leq j\leq m-1$ either $\ss _j$ is obtained from $\ss_{j+1}$ by an admissible move, or $\ss _{j+1}$ is obtained from $\ss_j$ by an admissible move.\end{enumerate}

We first proceed by induction on $l=n-i$. If $l=1$, then $i=n-1$ and there exists no $\ss ^{-1}(n-1)<j<\ss ^{-1}(n)$ such that $\ss (j)>n$, so $s_{n-1}\ss$ is obtained from $\ss$ by an admissible move.

For $l>1$, let $\{j_1<\dots <j_r\}$ be the set of integers satisfying that $\ss^{-1}(i)<j_s<\ss^{-1}(i+1)$ and $i+1<\sigma (j_s)$, for $1\leq j\leq r$ .

For $l=2$, we have that $i=n-2$ and $r\in \{0,1\}$. If $r=0$, then $s_{n-2}\ss $is obtained from $\ss$ by an admissible move. If $r=1$, then Lemma \ref{lemmatech} proves the result.

Suppose now that the result is true for all integers $k\leq l$ and for all $0\leq r\leq k-1$. We consider the case $l+1$, with $i=n-l-1$ and $0\leq r\leq l$, and proceed by a recursive argument on $r$.
If $r=0$, then the result is obviously true.

If $r\geq1$, then using repeatedly that the result is true for $l$, we get that $[\ldots i\ldots j_1\ldots j_r\ldots i+1\ldots ]$ is connected by $1$-covering relations to $[\ldots i\ldots i+2\ldots j_2\ldots  j_r\ldots i+1\ldots ]$. Again, applying many timesthat the result is true for $l-1$, we get that $[\ldots i\ldots j_1\ldots j_r\ldots i+1\ldots ]$ is connected by admissible moves to $[\ldots i\ldots i+2\ldots i+3\ldots  j_2\ldots j_r\ldots i+1\ldots ]$. By the same argument, we may replace at each step $j_k$ by $i+k+1$, and finally get that $\ss =[\ldots i\ldots j_1\ldots j_r\ldots i+1\ldots ]$ is connected to $[\ldots i\ldots i+2\ldots i+3\ldots i+r+1\ldots i+1\ldots ]$. So, it suffices to prove that $\ss$ is connected by admissible moves to $[\ldots i+1\ldots i+2\ldots i+3\ldots i+r+1\ldots i\ldots ]$.

We apply now the recursive argument on $r$. If $r=1$, we get the following sequence of permutations which are connected:
\begin{gather*}
[\ldots i \ldots i+2  \ldots i+1 \ldots] \\
[\ldots i \ldots i+1  \ldots i+2 \ldots] \\
[\ldots i+1 \ldots i  \ldots i+2 \ldots] \\
[\ldots i+2 \ldots i  \ldots i+1 \ldots] \\
[\ldots i+2 \ldots i+1  \ldots i \ldots] \\
[\ldots i+1 \ldots i+2  \ldots i \ldots]\end{gather*}
If $r>1$, applying recursive hypothesis on both $l$ and $r$, we get the following sequence of moves:
\begin{gather*}
[\ldots i \ldots i+2  \ldots i+r+1\ldots i+1 \ldots] \\
[\ldots i \ldots i+1  \ldots i+3 \ldots i+r+1\ldots i+2\ldots ] \\
[\ldots i+1 \ldots i  \ldots i+3 \ldots i+r+1\ldots i+2\ldots ]  \\
[\ldots i+2 \ldots i  \ldots i+3 \ldots i+r+1\ldots i+1 \ldots] \\
[\ldots i+2 \ldots i+1  \ldots i+3 \ldots i+r+1\ldots i \ldots] \\
[\ldots i+1 \ldots i+2 \ldots i+3 \ldots i+r+1\ldots i \ldots]\qedhere\end{gather*}
 \end{proof}

Here is the graph corresponding to $\texttt{P}^{3}$:

{\centerline {\scalebox{0.3}{\includegraphics{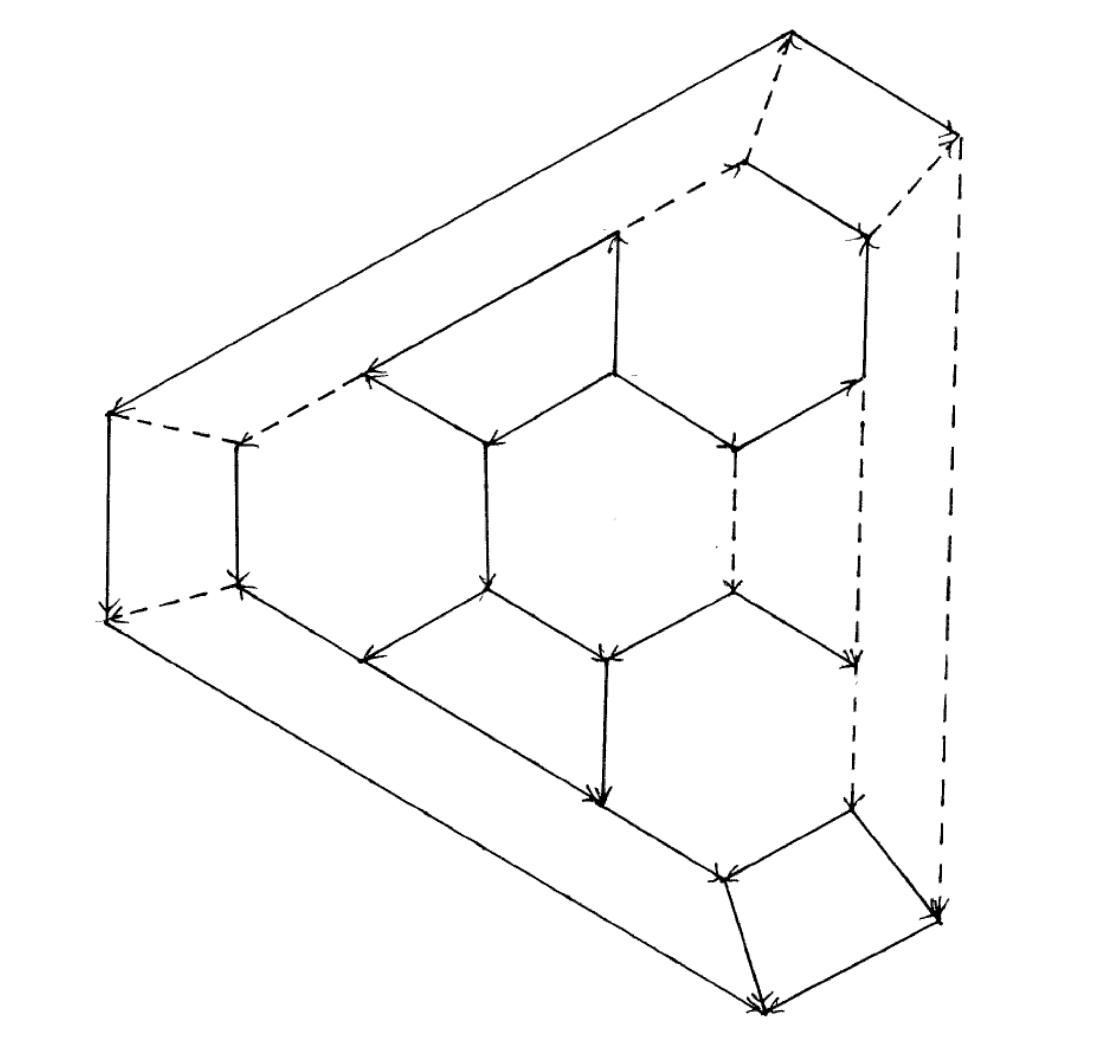}}}}


\end{document}